\documentclass[12pt]{amsart}
\usepackage{epstopdf}% To incorporate .eps illustrations using PDFLaTe\oL, etc.
\usepackage[numbers,sort&compress]{natbib}% Citation support using natbib.sty
\bibpunct[, ]{[}{]}{,}{n}{,}{,}% Citation support using natbib.sty
\makeatletter% @ becomes a letter
\def\NAT@def@citea{\def\@citea{\NAT@separator}}% Suppress spaces between citations using natbib.sty
\makeatother% @ becomes a symbol again
\usepackage{xcolor}%colors

\usepackage{amsmath}
\usepackage{amssymb}
\usepackage{amsthm}
\usepackage{amsfonts}
\usepackage{arydshln}
\usepackage{enumerate}
\usepackage[thinlines]{easybmat}
\numberwithin{equation}{section}

\DeclareMathOperator{\sees}{\text{\rm ess sup}}
\newcommand{\hsp}{\mathcal{H}}

\newcommand{\oL }{\mathcal{L}(\hsp) }

\newcommand{\h}{\mathcal{H}}

\newcommand{\ke}{{\mathbf{k}}}
\newcommand{\C}{{\mathbf{C}}}
\newcommand{\M}{{\mathbf{M}}}
\newcommand{\F}{{\mathbf{F}}}
\newcommand{\J}{{\mathbf{J}}}

\newcommand{\G}{{\mathbf{G}}}

\newcommand{\z}{{z}}

\newtheorem{theorem}{Theorem}[section]
\newtheorem{proposition}[theorem]{Proposition}

\theoremstyle{definition}

\newtheorem{remark}[theorem]{Remark}
\numberwithin{equation}{section}

\title[ MATTO's and Conjugations]{On matrix valued (asymmetric) truncated Toeplitz operators}

\author[N. G. Göğüş]{NİHAT GÖKHAN GÖĞÜŞ}
\address{Faculty of Engineering and Natural Sciences, Sabanci University Istanbul, Turkey}
\email {gokhan.gogus@sabanciuniv.edu}

\author[R. Khan]{REWAYAT KHAN}
\address{ Faculty of Engineering and Natural Sciences, Sabanci University Istanbul, Turkey}
\email{rewayat.khan@gmail.com}

\keywords{model spaces, matrix valued truncated Toeplitz operator, conjugations, Hankel operators}
\subjclass[2010]{Primary 47B35, Secondary 47B32, 30D20}
\begin{document}
\begin{abstract}{ Matrix valued (asymmetric) truncated Toeplitz operators are generally not complex symmetric. In this paper, we define a new conjugation with unique properties and study its relation to matrix valued asymmetric truncated Toeplitz operators. We also explore the connections between matrix valued asymmetric truncated Toeplitz operators and Hankel operators with matrix symbols.
	}
\end{abstract}
\maketitle

\section{\bf{Introduction}}

Let $H^2$ be the classical Hardy space in the unit disk $\mathbb{D}=\{\lambda\in\mathbb{C}:|\lambda|<1\}$. Truncated Toeplitz operators (TTO's) and asymmetric truncated Toeplitz operators (ATTO's) are compressions of multiplication operator to the backward shift invariant subspaces of $H^2$ (with two possibly different underlying subspaces in the asymmetric case). Each of these subspaces is of the form $K_{\theta}=(\theta H^{2})^{\perp}=H^2\ominus \theta H^2$, where $\theta$ is a complex-valued inner function: $\theta\in H^\infty$ and $|\theta(\z)|=1$ a.e. on the unit circle $\mathbb{T}=\partial\mathbb{D}=\{\z\in\mathbb{C}: |\z|=1\}$. Since D. Sarason's paper \cite{sar} TTO's, and later on ATTO's \cite{part,part2, BCKP}, have been intensely studied (see \cite{BCT,ChFT,cg,GMR,gar3, sed} and \cite{CKP, BM,blicharz1, jl,jurasik,l,l2}).

It is natural to consider TTO's and ATTO's defined on subspaces of vector valued Hardy space $H^2(\hsp)$ with $\hsp$ a separable complex Hilbert space. %In what follows we assume that $\dim\hsp<\infty$.
 A vector valued model space $K_{\Theta}\subset H^{2}(\hsp)$ is the orthogonal complement of $\Theta H^{2}(\hsp)$, that is, $K_{\Theta}= H^{2}(\hsp)\ominus  \Theta H^{2}(\hsp)$. Here $\Theta$ is an operator-valued inner function: a function with values in $\oL $ (the algebra of all bounded linear operators on $\hsp$), analytic in $\mathbb{D}$, bounded and such that the boundary values $\Theta(\z)$ are unitary operators a.e. on $\mathbb{T}$. These spaces appear in connection with model theory of Hilbert space contractions (see \cite{NF}). Let $P_{\Theta}$ be the orthogonal projection from $L^2(\hsp)$ onto $K_{\Theta}$.

For two operator valued inner functions $\Theta_1,\Theta_2\in H^{\infty}(\oL)$ and $\Phi\in L^2(\oL )$ (again, see Sections \ref{s2} and \ref{s3} for definitions) let
\begin{equation}\label{11}
A_{\Phi}^{\Theta_{1}, \Theta_{2}}f=P_{\Theta_{2}}(\Phi f),\quad f\in K_{\Theta_{1}}\cap H^{\infty}(\hsp ).
\end{equation}

 The operator $A_{\Phi}^{\Theta_{1}, \Theta_{2}}$ is called a matrix valued asymmetric truncated Toeplitz operator (MATTO), while $A_{\Phi}^{\Theta_{1}}=A_{\Phi}^{\Theta_{1}, \Theta_{1}}$ is called a matrix valued truncated Toeplitz operator (MTTO, see \cite{KT}). Both are densely defined on $K_{\Theta_{1}}$. Let $\mathcal{MT}(\Theta_{1},\Theta_{2})$ be the set of all MATTO's of the form \eqref{11} which can be extended boundedly to the whole space $K_{\Theta_{1}}$ and for $\Theta_{1}=\Theta_{2}=\Theta$ let $\mathcal{MT}(\Theta )=\mathcal{MT}(\Theta,\Theta)$.

 Two important examples of operators from $\mathcal{MT}(\Theta )$ are the model operators
\begin{equation}\label{modl}
S_{\Theta}=A_{z }^{\Theta }=A_{zI_{\hsp}}^{\Theta }\quad\text{and}\quad S_{\Theta }^*=A_{\bar z }^{\Theta }=A_{\bar zI_{\hsp}}^{\Theta  }.
\end{equation} It is known that each $C_0$ contraction with finite defect indices is unitarily equivalent to $S_{\Theta}$ for some operator-valued inner function $\Theta$ (see \cite[Chapter IV]{NF}). On the other hand, operators from $\mathcal{MT}(\Theta_{1},\Theta_{2})$ with certain bounded analytic symbols appear as the operators intertwining $S_{\Theta_{1}}$ and $S_{\Theta_{2}}$ (see \cite[p. 238]{berc}).
 Some algebraic properties of MTTO's were studied in \cite{KT}, while the asymmetric case was investigated in \cite{RK2}.

 \medskip
The plan of the paper is the following:
 In Sections 2 and 3, we briefly give some background details of vector valued function spaces and their operators. In Section 4, we define a new conjugation corresponding to the decomposition of model space and discuss its properties. The connection between MATTO's, vectorial Hankel operators and conjugations are discussed in Section 5.

\section{\bf{Vector valued function spaces and their operators}}\label{s2}

Let $\hsp$ be a complex separable Hilbert space. In what follows, $\|\cdot\|_\hsp$ and $\langle\cdot,\cdot\rangle_{\hsp}$ will denote the norm and the inner product on $\hsp$, respectively. Moreover, we will assume that $\dim\hsp<\infty$. The space $L^{2}(\hsp)$ can be defined as
$$L^2(\h)=\{f\colon\mathbb{T}\to \hsp: f \text{ is measurable and} \int_{\mathbb{T}} \| f(\z)\|_\hsp^2\,dm(\z)<\infty  \}$$
($m$ being the normalized Lebesgue measure on $\mathbb{T}$). As usual, each $f\in L^2(\hsp)$ is interpreted as a class of functions equal to representing $f$ a.e. on $\mathbb{T}$ with respect to $m$. The space $L^2(\h)$ is a (separable) Hilbert space with the inner product given by
\begin{equation*}
\langle f,g\rangle_{L^{2}(\hsp)}=\int_{\mathbb{T}}\langle f(\z),g(\z)\rangle_{\hsp}\,dm(\z),\quad f,g\in L^2(\hsp).
\end{equation*}

Equivalently, $L^2(\hsp)$ consists of elements $f:\mathbb{T}\to \hsp$ of the form
\begin{equation}\label{1}
 \begin{array}{rl}
 f(\z)=\sum\limits_{n=-\infty}^{\infty}a_{n}\z^n&  (\text{a.e. on }\mathbb{T})\\
  \text{with}&\{a_{n}\}\subset\hsp\text{ such that}  \sum\limits_{n=-\infty}^{\infty}\|a_{n}\|_\hsp^{2}<\infty.\end{array}\end{equation}
 The $n$-th Fourier coefficient $a_n$  of $f\in L^2(\hsp)$ is determined by
\begin{equation}\label{fourier}
\langle a_n,x\rangle_\hsp=\int_{\mathbb{T}}\overline{\z}^n\langle f(\z),x\rangle_\hsp  dm(\z)\quad \text{for all }x\in\hsp.
\end{equation}
If $f\in L^2(\hsp)$ is given by \eqref{1}, then its Fourier series converges in the  $L^2(\h)$ norm and
$$\|f\|_{L^2(\hsp)}^2=\int_{\mathbb{T}} \| f(\z)\|_\hsp^2\,dm(\z)=\sum\limits_{n=-\infty}^{\infty}\|a_{n}\|_\hsp^{2}.$$
Moreover, for $\displaystyle g(\z)=\sum_{n=-\infty}^{\infty}b_{n}\z^n\in L^2(\hsp)$ we have
$$\langle f, g\rangle_{L^2(\hsp)}=\sum\limits_{n=-\infty}^\infty \langle a_n,b_n\rangle_{\hsp}.$$
For $\hsp=\mathbb{C}$ we denote $L^2=L^2(\mathbb{C})$.

The vector valued Hardy space $H^2(\hsp)$ is defined as the set of all the elements of $L^2(\hsp)$ whose Fourier coefficients with negative indices vanish, that is,
$$H^2(\hsp)=\left\{f\in L^2(\hsp): f(\z)=\sum_{n=0}^{\infty} a_n \z^n\right\}.$$
Each $f\in H^2(\hsp)$, $\displaystyle f(\z)=\sum_{n=0}^{\infty} a_n \z^n$, can also be identified with a function
$$f(\lambda)=\sum\limits_{n=0}^\infty a_n\lambda^n,\quad \lambda\in\mathbb{D},$$
analytic in the unit disk $\mathbb{D}$ (the boundary values $f(\z)$ can be obtained from the radial limits, which converge to the boundary function in the $L^2(\hsp)$ norm). Denote by $P_+$ the orthogonal projection $P_+:L^2(\hsp)\to H^2(\hsp)$,
 $$P_+\left(\sum_{n=-\infty}^{\infty} a_n \z^n\right)=\sum_{n=0}^{\infty} a_n \z^n,$$
 and let $H^2=H^2(\mathbb{C})$.

 We can also consider the spaces
 $$L^{\infty}(\hsp)=\left\{f\colon\mathbb{T}\to \hsp: \begin{array}{l} f\text{ is measurable and}\\ \ \|f\|_{\infty}=\sees_{\z\in\mathbb{T}}\|f(\z)\|_{\hsp}<\infty\end{array}  \right\}$$
 (clearly, $L^{\infty}(\hsp)\subset L^{2}(\hsp)$) and
 $$H^{\infty}(\hsp)=L^{\infty}(\hsp)\cap H^{2}(\hsp),$$
the latter seen also as the space of all bounded $\hsp$--valued functions which are analytic in $\mathbb{D}$.

Now let $\oL $ be the algebra of all bounded linear operators on $\hsp$ equipped with the operator norm $\|\cdot\|_{\oL}$. In the case $\dim\hsp=d<\infty$, each element of $\oL $ can be identified with a $d\times d$ matrix. Denote
$$L^{\infty}(\oL)=\left\{\F\colon\mathbb{T}\to \oL:\!\!\begin{array}{l} \F\text{ is measurable and}\\ \ \|\F\|_{\infty}=\sees_{\z\in\mathbb{T}}\|\F(\z)\|_{\oL}<\infty\end{array}  \right\}$$
(a function $\F\colon\mathbb{T}\to \oL$ is measurable if $\F(\cdot)x\colon\mathbb{T}\to \hsp$ is measurable for every $x\in \hsp$).
Each $\F\in L^{\infty}(\oL)$ admits a formal Fourier expansion (a.e. on $\mathbb{T}$)
	\begin{equation}\label{jeden}
\mathbf{F}(\z)=\sum_{n=-\infty}^{\infty}{F}_n \z^n\quad\text{with }\{F_n\}\subset  \oL
\end{equation}
defined by
	\begin{equation}\label{fourier2}
F_nx=\int_\mathbb{T} \overline{\z}^n \mathbf{F}(\z)x\,dm(\z) \quad\text{for }  x\in \hsp
\end{equation}
(integrated in the strong sense). Let
$$H^{\infty}(\oL)=\left\{\F\in L^2(\oL): \F(\z)=\sum_{n=0}^{\infty} F_n \z^n\right\}.$$
The space $H^\infty(\oL)$ can equivalently be defined as the space of all analytic functions $\F:\mathbb{D}\to\oL $ such that
$$\|\F\|_{\infty}=\sup_{\lambda\in\mathbb{D}}\|\F(\lambda)\|_{\oL}<\infty.$$
Each such bounded analytic $\F$ is of the form
\begin{equation}
\label{F1}\F(\lambda)=\sum_{n=0}^{\infty}{F}_n\lambda^n,\quad \lambda\in\mathbb{D},
\end{equation}
and can be identified with the boundary function
\begin{equation}\label{F2}
\F(\z)=\sum_{n=0}^{\infty}{F}_n\z^n\in L^{\infty}(\oL).
\end{equation}
Conversely, each $\F\in L^{\infty}(\oL)$ given by \eqref{F2} can be extended by \eqref{F1} to a function bounded and analytic in $\mathbb{D}$. In each case the coefficients $\{F_n\}$ can be obtained by \eqref{fourier2} and the norms $\|\cdot\|_{\infty}$ of the boundary function and its extension coincide (see \cite[p. 232]{berc}).

Note that for each $\lambda\in \mathbb{D}$ the function $\ke_\lambda(z)= (1-\bar{\lambda}z)^{-1}I_{\hsp}$ belongs to $H^{\infty}(\oL )$. Moreover, for every $x\in\hsp$ the function $\ke_\lambda x:z\mapsto \ke_\lambda(z)x$ belongs to $H^{\infty}(\hsp)$ and has the following reproducing property
$$\langle f, \ke_\lambda x\rangle_{L^2(\hsp)}=\langle f(\lambda),x\rangle_{\hsp}, \quad f\in H^2(\hsp).$$
For each $\lambda\in\mathbb{D}$ we can consider
$$\ke_{\lambda}^{\Theta}(z)=\tfrac1{1-\bar\lambda z}(I_\hsp-\Theta(z)\Theta(\lambda)^*)\in H^{\infty}(\oL).$$
For $x\in\hsp$ we will denote the function $z\mapsto \ke_{\lambda}^{\Theta}(z)x$ simply by $\ke_{\lambda}^{\Theta}x$. Then, for each $x\in\hsp$ and $\lambda\in\mathbb{D}$, the function $\ke_{\lambda}^{\Theta}x=P_{\Theta}(\ke_{\lambda}x)$  belongs to $K_{\Theta}^{\infty}=K_{\Theta}\cap H^{\infty}(\hsp )$ and has the following reproducing property
$$\langle f, \ke_{\lambda}^{\Theta}x\rangle_{L^2(\hsp)}=\langle f(\lambda), x\rangle_\hsp\quad\text{for every }f\in K_{\Theta}.$$
It follows in particular that $K_{\Theta}^{\infty}$ is a dense subset of $K_{\Theta}$.

To each $\F\in L^\infty(\oL)$ there corresponds a multiplication operator $M_\F:L^2(\hsp)\to L^2(\hsp)$: for $f\in L^2(\hsp)$,
$$(M_\F f)(\z)=\F(\z)f(\z)\quad \text{a.e. on }\mathbb{T}.$$
We will write $\F f$ instead of $M_\F f$. For a constant $\F$, that is, $\F(z)=F$ a.e. on $\mathbb{T}$ for some $F\in\oL$, we will also write $Ff$ instead of $\F f$. By $T_{\F}$ we will denote the compression of $M_{\F}$ to the Hardy space: $T_{\F}:H^2(\hsp)\to H^2(\hsp)$, $$T_{\F}f=P_+M_{\F}f\quad\text{for }f\in H^2(\hsp).$$

It is clear that $(M_\F)^*=M_{\F^*}$ and $(T_\F)^*=T_{\F^*}$, where $\F^*(\z)=\F(\z)^*$ a.e. on $\mathbb{T}$. It is also not difficult to verify that for $\F\in L^{\infty}(\oL)$ we have that $\F\in H^{\infty}(\oL)$ if and only if $M_{\F}(H^2(\hsp))\subset H^2(\hsp)$. In particular, for $M_z=M_{zI_\hsp}$ we have $M_z^*=M_{\bar{z}}=M_{\bar{z}I_\hsp}$ and $M_z(H^2(\hsp))\subset H^2(\hsp)$. The operator $S=T_z=M_{z|H^2(\hsp)}$ is called the (forward) shift operator. Its adjoint, the backward shift operator $S^*=T_{\bar z}$, is given by the formula
$$S^*f(z)=\bar z\big(f(z)-f(0)\big).$$

We can consider $\oL $ as a Hilbert space with the Hilbert--Schmidt norm and we may also define the spaces $L^2(\oL)$ and $H^2(\oL)$ as above. Recall that the norm and the corresponding inner product are defined as follows: for Hilbert-Schmidt operators $A,B\in \oL$ we have
$$\|A\|^2_2=tr(A^*A)=\sum_{e\in \varepsilon} \langle Ae, Ae\rangle_{\hsp}$$ and
$$\langle A, B\rangle=tr(B^*A)=\sum_{e\in \varepsilon} \langle Ae, Be\rangle_{\hsp}$$
$\varepsilon$ being any orthonormal basis for $\hsp$ (see \cite[Chapter 3]{GMR}). Hence for $\F, \G \in L^2(\oL)$
\begin{align*}
\langle \F, \G\rangle_{L^{2}(\oL)}
&=\int_{\mathbb{T}}\langle \F(z), \G(z)\rangle_2 dm(z)\\
&=\int_{\mathbb{T}}\sum_{e\in \varepsilon}\langle \F(z)e, \G(z)e\rangle_\hsp dm(z).
\end{align*}
Since $\langle A,B\rangle_2=\langle B^*, A^*\rangle_2$, it follows that $\langle \F,\G\rangle_{L^2(\oL)}=\langle \G^*,\F^*\rangle_{L^2(\oL)}$.\par
Since here the Hilbert--Schmidt norm and the operator norm are equivalent, we have
$$L^{\infty}(\oL )\subset L^2(\oL ),\quad H^{\infty}(\oL )\subset H^2(\oL ).$$
Moreover, it is not difficult to verify that if $\F\in L^2(\oL)$ is given by
$$\F(z)=\sum_{n=-\infty}^{\infty}F_{n}z^n, \quad F_n\in \oL,$$
where the series is convergent in the $L^2(\oL)$-norm, then
$$\F^*(z)=\F(z)^*=\sum_{n=-\infty}^{\infty}(F_{-n})^{*}z^n.$$
We thus have
$$L^2(\oL)=\left[zH^2(\oL)\right]^*\oplus H^2(\oL).$$

For $\F\in L^2(\oL )$ the operators $M_{\F}$ and $T_{\F}$ can be densely defined on $L^{2}(\hsp )$ and $H^{2}(\hsp )$, respectively.
 For more details on spaces of vector-valued functions, we refer the reader to \cite{berc,NF,RR}.

Let us now consider conjugations. A conjugation $\Gamma$ in a Hilbert space
$\hsp$ is an antilinear map $\Gamma:\hsp\longrightarrow \hsp$ such that $\Gamma^2=I_\hsp$ and
$$\langle \Gamma f,\Gamma g\rangle=\langle g, f\rangle \quad \text{for all}\quad f,g\in \hsp. $$
The importance of
conjugations comes, for example, from their connection with complex
symmetric operators. Recall that a bounded linear operator $T:\hsp\longrightarrow\hsp$ is said to be $\Gamma$-symmetric ($\Gamma$ being a conjugation on $\hsp$) if $\Gamma T\Gamma=T^*$. We say that $T$ is complex symmetric if it is $\Gamma$-symmetric with respect
to some conjugation $\Gamma$ (see, e.g., \cite{GP} for more details on conjugations
and complex symmetric operators).

In \cite{CKLP} the authors consider certain classes of conjugations in $L^2(\hsp)$. One such conjugation is $\widetilde{\J}:L^2(\hsp)\to L^2(\hsp)$ defined for a fixed conjugation $\Gamma$ in $\hsp$ by
\begin{equation}\label{J1}
(\widetilde{\J}f)(z)=\Gamma(f(z))\quad \text{a.e. on}~~\mathbb{T}.
\end{equation}
 The other one
 is $\J^*:L^2(\hsp)\to L^2(\hsp)$ defined by
\begin{equation}\label{J}
(\J^*f)(z)=\Gamma(f(\overline{z}))\quad \text{a.e. on}~~\mathbb{T}.
\end{equation}
It is not difficult to verify that for $f(z)=\sum_{n=-\infty}^{\infty}a_nz^n\in L^{2}(\hsp)$ we have
$$(\J^*f)(z)=\sum_{n=-\infty}^{\infty}\Gamma(a_n)z^n.$$
Hence, $\J^*$ is an $\M_z$-commuting conjugation, i.e, $\J^*\M_z=\M_z\J^*$, and $\J^*(H^{2}(\hsp))=H^{2}(\hsp)$, $\J^*P_+=P_+\J^*$ (see \cite[Section 4]{CKLP}). The conjugations $\widetilde{\J}$ and $\J^*$  was considered in \cite{AP} for $\text{dim}~ \hsp=1$.

For $\F\in L^\infty(\oL)$ and an arbitrary conjugation $\Gamma$ in $\hsp$ let
\begin{equation}\label{F}
\F_\Gamma(z)=\Gamma\F(z)\Gamma\quad \text{a.e on}~~~ \mathbb{T}.
\end{equation}
Then $\F_\Gamma\in L^\infty(\oL)$. As observed in \cite{CKLP}, $\F_\Gamma\in H^\infty(\oL)$ if and only if $\F^*\in H^\infty(\oL)$, and $\F_\Gamma$ is an inner function if and only if $\F$ is. Clearly, $(\F_\Gamma)_\Gamma=\F.$ Let us also observe that if $\F$ is $\Gamma$-symmetric, that is, $\Gamma\F(z)\Gamma=\F(z)^*$ a.e on $\mathbb{T}$ (or equivalently $\F(\lambda)$ is $\Gamma$-symmetric for $\lambda$ in $\mathbb{D}$, see \cite{CKLP}), then $\F_\Gamma=\widetilde{\F}$, where $\widetilde{\F}(z)=\F(\bar z)^{*}$.
 In particular
\begin{equation}\label{JM}
\J^*\M_\F=\M_\F\J^*
\end{equation}

Note that $\F_\Gamma$ is also defined for $\F\in L^{2}(\oL)$.

\section{\bf{Model spaces and their decompositions}}
In this section, we discuss the decomposition of model spaces corresponding to the product of inner functions.
Let $\Theta, \Lambda\in H^{\infty}(\oL)$ be two inner functions. Then we say that $\Lambda$ divides $\Theta$ if there exists an inner function $\Psi\in H^{\infty}(\oL)$ such that $\Theta=\Lambda\Psi$. Th scalar case was considered in \cite{CKP}.

Recall the definition of $\Gamma$-symmetry. We say that $\Phi\in L^{2}(\oL)$ is $\Gamma$-symmetric if $\Gamma\Phi(z)\Gamma=\Phi(z)^{*}$ a.e. on $\mathbb{T}$.

\medskip

If $\Lambda$ divides $\Theta$, then we have $K_{\Lambda}\subset K_{\Theta}$. Let $\Theta\Lambda^{*}=\Phi$ and $\Lambda^{*}\Theta=\Psi$, then the following result shows that if $\Theta, \Lambda$, and $\Psi$ are $\Gamma$-symmetric then $\Psi$ and $\Lambda$ commutes, and hence $\Phi=\Psi$. Moreover, the two statements in \cite[Lemma 7.2]{CKLP} can be reduced to only one.

\begin{proposition}\label{uniq}
Let $\Theta, \Lambda$, and $\Psi$  be $\Gamma$-symmetric inner functions such that $\Theta=\Lambda\Psi$. Then, $\Lambda$ and $\Psi$ commute.
\end{proposition}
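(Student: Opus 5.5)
The plan is to work pointwise a.e. on $\mathbb{T}$ and exploit the relation between $\Gamma$-symmetry and adjoints. For $\z$ in a set of full measure we have, simultaneously,
$$\Theta(\z)=\Lambda(\z)\Psi(\z),\qquad \Gamma\Theta(\z)\Gamma=\Theta(\z)^*,\qquad \Gamma\Lambda(\z)\Gamma=\Lambda(\z)^*,\qquad \Gamma\Psi(\z)\Gamma=\Psi(\z)^*,$$
and all three values are unitary. We fix such a $\z$ and drop it from the notation, writing $\Theta,\Lambda,\Psi$ for operators on $\hsp$.

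The key step is to conjugate the factorization by $\Gamma$. Using $\Gamma^2=I_\hsp$,
$$\Theta^*=\Gamma\Theta\Gamma=\Gamma\Lambda\Gamma\,\Gamma\Psi\Gamma=\Lambda^*\Psi^*.$$
Taking adjoints of this identity gives $\Theta=\Psi\Lambda$. Comparing with $\Theta=\Lambda\Psi$ yields $\Lambda(\z)\Psi(\z)=\Psi(\z)\Lambda(\z)$ a.e. on $\mathbb{T}$. Unitarity is not even needed here. The only points to check are that $\Gamma A B\Gamma=(\Gamma A\Gamma)(\Gamma B\Gamma)$ holds for linear $A,B$, and that each $\Gamma A\Gamma$ is linear; both follow at once from antilinearity and $\Gamma^2=I_\hsp$.

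Finally, I would pass from the boundary to the disk. The function $\Lambda\Psi-\Psi\Lambda$ lies in $H^\infty(\oL)$ and its boundary values vanish a.e. Since its Fourier coefficients are then all zero by \eqref{fourier2}, it vanishes identically on $\mathbb{D}$. Hence $\Lambda$ and $\Psi$ commute as functions, and consequently $\Phi=\Theta\Lambda^*=\Lambda\Psi\Lambda^*=\Psi\Lambda\Lambda^*=\Psi$ a.e. on $\mathbb{T}$.

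I do not expect a real obstacle. The only delicate point is bookkeeping of null sets, namely intersecting the exceptional sets of the three symmetry relations and the factorization so that all identities hold at the same $\z$. This is routine.
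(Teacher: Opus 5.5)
Your proposal is correct and is essentially the paper's proof: conjugating $\Theta=\Lambda\Psi$ by $\Gamma$ gives $\Theta^*=\Lambda^*\Psi^*$, and taking adjoints yields $\Theta=\Psi\Lambda$. Your pointwise a.e.\ bookkeeping and the extension of the boundary identity to $\mathbb{D}$ via Fourier coefficients are valid details that the paper leaves implicit.
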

\begin{proof}
Let $\Theta, \Lambda$, and $\Psi$ be $\Gamma$-symmetric inner functions such that $\Theta=\Lambda\Psi$.
Now, $\Theta$ is $\Gamma$-symmetric inner function, we have
$$\Theta^{*}=\Gamma\Theta\Gamma=\Gamma(\Lambda\Psi)\Gamma=\Gamma\Lambda\Gamma.\Gamma\Psi\Gamma=\Lambda^{*}\Psi^{*}=(\Psi\Lambda)^{*}.$$
It follows that $\Theta=\Psi\Lambda$. We also have $\Theta=\Lambda\Psi=\Psi\Lambda$.
\end{proof}

\medskip

 \begin{proposition}
Let $\Theta, \Lambda$, and $\Psi$ be inner functions  such that $\Theta=\Lambda\Psi$. Then, for any $x\in \hsp$ and $\lambda\in \mathbb{D}$, the following hold:
\begin{enumerate}[(1)]
    \item $K_{\Theta}=K_{\Lambda}\oplus \Lambda K_{\Psi}.$\label{decom}
    \item $P_{\Theta}=P_{\Lambda}+\Lambda P_{\Psi}\Lambda^{*}.$
\item $k_{\lambda}^{\Theta}(z)x=k_{\lambda}^{\Lambda}(z)x+\Lambda(z)k_{\lambda}^{\Psi}(z)\Lambda(\lambda)^{*}x.$
    \item $\widetilde{k}_{\lambda}^{\Theta}(z)x=\widetilde{k}_{\lambda}^{\Lambda}(z)\Psi(\lambda)x+\Lambda(z)\widetilde{k}_{\lambda}^{\Psi}(z)x.$
\item $P_{\Lambda}(k_{\lambda}^{\Theta}(z)x)=k_{\lambda}^{\Lambda}(z)x, \quad P_{\Lambda}(\widetilde{k}_{\lambda}^{\Theta}(z)x)=\widetilde{k}_{\lambda}^{\Lambda}(z)\Psi(\lambda)x.$
\end{enumerate}
 \end{proposition}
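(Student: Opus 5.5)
Throughout, I take the conjugate kernel to be the usual one in the vector-valued setting,
$$\widetilde{k}_{\lambda}^{\Theta}(z)x=\frac{\Theta(z)-\Theta(\lambda)}{z-\lambda}\,x ,$$
since it has not yet been defined explicitly in the paper. The plan is to prove (1) and (2) by operator-theoretic arguments using that multiplication by an inner function is an isometry. Items (3) and (4) are algebraic identities between kernels. Item (5) then follows from (3), (4) and the orthogonal decomposition in (1).

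For (1), I use $H^2(\hsp)=K_\Lambda\oplus\Lambda H^2(\hsp)$ and $H^2(\hsp)=K_\Psi\oplus\Psi H^2(\hsp)$. Since $M_\Lambda$ is an isometry (indeed unitary on $L^2(\hsp)$, because $\Lambda(z)$ is unitary a.e.), it maps the second decomposition onto
$$\Lambda H^2(\hsp)=\Lambda K_\Psi\oplus \Lambda\Psi H^2(\hsp)=\Lambda K_\Psi\oplus\Theta H^2(\hsp).$$
Hence $H^2(\hsp)=K_\Lambda\oplus\Lambda K_\Psi\oplus\Theta H^2(\hsp)$, and taking the orthogonal complement of $\Theta H^2(\hsp)$ gives (1).

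For (2), I note that $M_\Lambda P_\Psi M_\Lambda^{*}$ is a self-adjoint idempotent on $L^2(\hsp)$, because $M_\Lambda$ is unitary there. Its range is $\Lambda K_\Psi$, so it is the orthogonal projection onto $\Lambda K_\Psi$. Since $K_\Lambda\perp\Lambda K_\Psi$, the sum $P_\Lambda+\Lambda P_\Psi\Lambda^*$ is the orthogonal projection onto $K_\Lambda\oplus\Lambda K_\Psi=K_\Theta$, which is $P_\Theta$.

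For (3), I expand $\Theta(z)\Theta(\lambda)^*=\Lambda(z)\Psi(z)\Psi(\lambda)^*\Lambda(\lambda)^*$ and write
$$I-\Theta(z)\Theta(\lambda)^*=\bigl(I-\Lambda(z)\Lambda(\lambda)^*\bigr)+\Lambda(z)\bigl(I-\Psi(z)\Psi(\lambda)^*\bigr)\Lambda(\lambda)^*,$$
then divide by $1-\bar\lambda z$. For (4), I similarly write
$$\Theta(z)-\Theta(\lambda)=\bigl(\Lambda(z)-\Lambda(\lambda)\bigr)\Psi(\lambda)+\Lambda(z)\bigl(\Psi(z)-\Psi(\lambda)\bigr)$$
and divide by $z-\lambda$.

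For (5), the first identity follows from (3). We have $k_\lambda^\Lambda x\in K_\Lambda$, and $k_\lambda^\Psi\Lambda(\lambda)^*x\in K_\Psi$, so $\Lambda k_\lambda^\Psi\Lambda(\lambda)^*x\in\Lambda K_\Psi$. By (1) this last term is orthogonal to $K_\Lambda$, so $P_\Lambda$ kills it. The second identity follows from (4) in the same way, once I know that $\widetilde k_\lambda^{\Xi}y\in K_{\Xi}$ for every inner $\Xi$ and every $y\in\hsp$.

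This membership is the one step that is not pure algebra, and I expect it to be the main (mild) obstacle. To prove it, I first note that the function is analytic and bounded, so it lies in $H^2(\hsp)$. For orthogonality to $\Xi H^2(\hsp)$, I use $\Xi(z)^*\Xi(z)=I$ on $\mathbb{T}$ to compute, for $h\in H^2(\hsp)$,
$$\langle \Xi h,\widetilde k^{\Xi}_\lambda y\rangle=\int_{\mathbb T}\Bigl\langle h(z),\tfrac{y-\Xi(z)^*\Xi(\lambda)y}{z-\lambda}\Bigr\rangle dm.$$
Writing $\frac1{z-\lambda}=\frac{\bar z}{1-\lambda\bar z}$ on $\mathbb{T}$, the second argument becomes $\bar z$ times a conjugate-analytic function. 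It therefore lies in $\overline{zH^2(\hsp)}$, which is orthogonal to $H^2(\hsp)$, so the inner product vanishes.
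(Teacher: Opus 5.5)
Your proof is correct and follows the paper's route: the algebraic splitting of $I-\Theta(z)\Theta(\lambda)^*$ and of $\Theta(z)-\Theta(\lambda)$ gives (3)--(4), and projecting onto $K_\Lambda$ through the orthogonal decomposition (1) gives (5). You also supply details the paper omits. These are the unitarity of $M_\Lambda$ behind (1)--(2), which the paper just calls easy; the choice $\widetilde{k}^{\Theta}_{\lambda}(z)=(\Theta(z)-\Theta(\lambda))/(z-\lambda)$, which the paper never defines but which is the standard one and consistent with (4); and the membership $\widetilde{k}^{\Xi}_{\lambda}y\in K_{\Xi}$, which the second identity in (5) needs.
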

\begin{proof}
The proof of (1) and (2) are easy.

\noindent (3) Consider the reproducing kernel for $K_\Theta$, given by
\begin{align*}
k_{\lambda}^{\Theta}x
&=\frac{1}{1-\overline{\lambda}z}(I-\Theta(z)\Theta(\lambda)^{*})x\\
&=\frac{1}{1-\overline{\lambda}z}(I-\Lambda(z)\Psi(z)\Psi(\lambda)^{*}\Lambda(\lambda)^{*})x\\
&=\frac{1}{1-\overline{\lambda}z}(I-\Lambda(z)\Lambda(\lambda)^{*}+\Lambda(z)\Lambda(\lambda)^{*}
-\Lambda(z)\Psi(z)\Psi(\lambda)^{*}\Lambda(\lambda)^{*})x\\
&=\frac{1}{1-\overline{\lambda}z}[(I-\Lambda(z)\Lambda(\lambda)^{*})+\Lambda(z)(I
-\Psi(z)\Psi(\lambda)^{*})\Lambda(\lambda)^{*})]x\\
&=k_{\lambda}^{\Lambda}x+\Lambda(z)k_{\lambda}^{\Psi}\Lambda(\lambda)^{*}x.
\end{align*}
(4) It follows by similar arguments as in (3).

\noindent Item (5) follows from the orthogonal projection of the vectors in (3) and (4) on the model space $K_{\Lambda}$.
\end{proof}

\section{\bf{Conjugations on model spaces}}

In the scalar case, each model space $K_{\theta}$ is equipped with a natural conjugation $C_{\theta}$, defined in terms of boundary functions by
$(C_\theta f)(z)=\theta(z)\overline{z}\overline{f(z)}$. Conjugation $C_\theta$ was considered in \cite{sar}. If $\Theta\in H^\infty(\oL)$ is an inner function and $\Gamma$ is a conjugation in $\hsp$, we can similarly define $\C_{\Theta}:L^2(\hsp)\to L^2(\hsp)$ by
\begin{equation}\label{CTheta}
(\C_{\Theta}f)(z)=\Theta(z)\overline{z} \Gamma(f(z))\quad \text{a.e. on }~~\mathbb{T}.
\end{equation}
Although $\C_{\Theta}$ is obviously an antilinear isometry, it is in general not an involution. A simple computation shows that $\C_{\Theta}$ is an involution (and therefore a conjugation) if and only if $\Theta(z)\Gamma\Theta(z)\Gamma=I_{\hsp}$ a.e. on $\mathbb{T}$, i.e., if and only if $\Theta$ is $\Gamma$-symmetric. Conjugation like \eqref{CTheta} was previously considered in \cite{CKLP}.

If $\Theta$ is $\Gamma$-symmetric, then $\C_{\Theta}(\Theta H^2(\hsp))=H^2(\hsp)^{\perp}$ and so
$$\C_{\Theta}(K_\Theta)=K_\Theta.$$

Recall that in the scalar case $\hsp=\mathbb{C}$ every TTO on the model space $K_\theta$ is $C_\theta$-symmetric, i.e.,
$$C_\theta A_\varphi^\theta C_\theta=(A_\varphi^\theta)^*=A_{\overline{\varphi}}^\theta$$
(see, e.g., \cite{sar}). In that case however, the only conjugation in $\hsp$ we need to consider is $\Gamma(z)=\overline{z}$ (and each $\varphi\in L^2$ is $\Gamma$-symmetric). In the vector valued case, the equality
\begin{equation}\label{eq: 4.12}
C_{\Theta} A_{\Phi}^\Theta C_{\Theta}=A_{\Phi^*}^\Theta
\end{equation}
is not necessarily true for an arbitrary $\Phi\in L^2(\oL)$ (even though we assume here that $\Theta$ is $\Gamma$-symmetric). However, it is satisfied if also $\Phi$ is $\Gamma$-symmetric and \text{commutes} with $\Theta$ (see \cite{KT}).

Note that in that case,
$$\C_{\Theta}=\J^*\tau_\Theta,$$
where $\tau_{\Theta}: L^{2}(\hsp)\to L^{2}(\hsp)$ given by
\begin{equation}
(\tau_{\Theta}f)(z)=\bar z\Theta(\bar z)^{*}f(\bar z) \quad a.e. ~~\text{on}~~ \mathbb{T},
\end{equation}
 is a unitary operator which maps $K_{\Theta}$ onto $K_{\widetilde{\Theta}}$, where $\widetilde{\Theta}(z)=\Theta(\bar z)^{*}$.

\medskip

In this section, we study a new conjugation on model spaces. We define a conjugation that corresponds to the orthogonal decomposition of model spaces, corresponding to the inner function $\Theta=\Lambda\Psi=\Psi\Lambda$, where $\Theta, \Lambda$, and $\Psi$ are $\Gamma$-symmetric inner functions. Conjugation in Theorem \ref{conj} was considered in \cite{CKP} in case $\text{dim}~\hsp=1$.

\begin{theorem}\label{conj}
Let $\Theta, \Lambda$, and $\Psi$ be $\Gamma$-symmetric inner functions such that $\Theta=\Lambda\Psi$. Then, the operator $$C_{\Lambda,\Psi}:K_{\Theta}=K_{\Lambda}\oplus\Lambda K_{\Psi}\to K_{\Theta}=K_{\Lambda}\oplus\Lambda K_{\Psi}$$ defined by
$$C_{\Lambda,\Psi}=C_{\Lambda}\oplus\Lambda C_{\Psi}\Lambda^{*}=\begin{bmatrix}
C_{\Lambda}&0\\
0&\Lambda C_{\Psi}\Lambda^{*}
\end{bmatrix}
$$
is a conjugation on $K_{\Theta}$.
\end{theorem}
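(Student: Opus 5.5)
We must show that $C_{\Lambda,\Psi}$ is antilinear, maps $K_\Theta$ into itself, is isometric in the sense $\langle C_{\Lambda,\Psi}f,C_{\Lambda,\Psi}g\rangle=\langle g,f\rangle$, and satisfies $C_{\Lambda,\Psi}^2=I$. The plan is to reduce everything to the two summands of the decomposition $K_\Theta=K_\Lambda\oplus\Lambda K_\Psi$ from part (1) of the preceding proposition. Each $f\in K_\Theta$ is written uniquely as $f=f_1+\Lambda f_2$ with $f_1\in K_\Lambda$ and $f_2\in K_\Psi$, and we define
$$C_{\Lambda,\Psi}f=C_\Lambda f_1+\Lambda C_\Psi\Lambda^*(\Lambda f_2)=C_\Lambda f_1+\Lambda C_\Psi f_2,$$
using that $\Lambda(z)$ is unitary a.e. on $\mathbb{T}$, so $\Lambda^*\Lambda f_2=f_2$.

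\textbf{Invariance.} Since $\Lambda$ and $\Psi$ are $\Gamma$-symmetric, the discussion after \eqref{CTheta} shows that $C_\Lambda$ and $C_\Psi$ are conjugations on $L^2(\hsp)$ with $C_\Lambda(K_\Lambda)=K_\Lambda$ and $C_\Psi(K_\Psi)=K_\Psi$. Hence $C_\Lambda f_1\in K_\Lambda$ and $\Lambda C_\Psi f_2\in\Lambda K_\Psi$, so $C_{\Lambda,\Psi}$ maps $K_\Theta$ into itself and preserves each summand. Antilinearity is immediate, since $C_\Lambda$ and $C_\Psi$ are antilinear while multiplication by $\Lambda$ and $\Lambda^*$ is linear.

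\textbf{Involution.} We compute
$$C_{\Lambda,\Psi}^2 f=C_\Lambda^2f_1+\Lambda C_\Psi\Lambda^*\Lambda C_\Psi f_2=f_1+\Lambda C_\Psi^2 f_2=f_1+\Lambda f_2=f.$$

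\textbf{Isometry.} For $g=g_1+\Lambda g_2$, orthogonality of the summands gives
$$\langle C_{\Lambda,\Psi}f,C_{\Lambda,\Psi}g\rangle=\langle C_\Lambda f_1,C_\Lambda g_1\rangle+\langle \Lambda C_\Psi f_2,\Lambda C_\Psi g_2\rangle.$$
Because $\Lambda$ is unitary-valued a.e., $M_\Lambda$ is an isometry on $L^2(\hsp)$, so the second term equals $\langle C_\Psi f_2,C_\Psi g_2\rangle=\langle g_2,f_2\rangle$, and the first equals $\langle g_1,f_1\rangle$. Summing, and again using $\langle g_2,f_2\rangle=\langle\Lambda g_2,\Lambda f_2\rangle$ together with orthogonality of the decomposition, gives $\langle g,f\rangle$.

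\textbf{Main point of care.} The only delicate step is confirming that $C_\Lambda$ and $C_\Psi$ are genuine involutions preserving their model spaces; this is exactly where $\Gamma$-symmetry of $\Lambda$ and $\Psi$ is used, via $\C_{\Theta}(\Theta H^2(\hsp))=H^2(\hsp)^\perp$. The $\Gamma$-symmetry of $\Theta$, and the commutation $\Lambda\Psi=\Psi\Lambda$ from Proposition \ref{uniq}, are not needed for this argument beyond guaranteeing the decomposition. Everything else is bookkeeping with the orthogonal decomposition.
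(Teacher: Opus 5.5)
Your proof is correct and takes essentially the same route as the paper. Both arguments use the decomposition $K_\Theta=K_\Lambda\oplus\Lambda K_\Psi$ and the fact that $C_\Lambda$ and $C_\Psi$ are conjugations preserving $K_\Lambda$ and $K_\Psi$, then check that $\Lambda C_\Psi\Lambda^*$ is antilinear, leaves $\Lambda K_\Psi$ invariant, is an involution and is isometric, using that $\Lambda$ is unitary-valued a.e. The only difference is minor: you verify $\langle C_{\Lambda,\Psi}f,C_{\Lambda,\Psi}g\rangle=\langle g,f\rangle$ on all of $K_\Theta$ directly, while the paper checks only norm preservation on the second summand and leaves the polarization and orthogonal-sum step implicit.
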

\begin{proof}
To show that $C_{\Lambda,\Psi}$ is a conjugation on $K_{\Theta}$, we have to show that $\Lambda C_{\Psi}\Lambda^*$ is a conjugation on $\Lambda K_{\Psi}$. Since $\Lambda$ and $\Psi$ are $\Gamma$-symmetric, we have $C_{\Lambda}$ and $C_{\Psi}$ as conjugations on $K_{\Lambda}$ and $K_{\Psi}$, respectively. Let $\mathcal{C}=\Lambda C_{\Psi}\Lambda^*$. First we have to show that $\mathcal{C}(\Lambda K_\Psi)=\Lambda K_\Psi$.
Let $f\in \Lambda K_\Psi$, then $f=\Lambda g$ for some $g\in K_\Psi$. Then
$$\mathcal{C}(f)=\Lambda C_\Psi\Lambda^*(\Lambda g)=\Lambda C_\Psi g \in \Lambda K_\Psi.$$ It follows that $\mathcal{C}(\Lambda K_\Psi) \subset  \Lambda K_\Psi$.
The operator $\mathcal{C}$ is involution because $$\mathcal{C}^2=\Lambda C_{\Psi}\Lambda^* \Lambda C_{\Psi}\Lambda^*=\Lambda C_{\Psi}^2\Lambda^*=I_{\Lambda K_\Psi}.$$
Hence, we have
$$\mathcal{C}(\Lambda K_\Psi)=\Lambda K_\Psi.$$
The antilinearity of $\mathcal{C}$ follows from the antilinearity of $C_\Psi$.

Now, we prove that $\mathcal{C}$ is isometric, i.e., $\|\mathcal{C}f\|=\|f\|$ for all $f=\Lambda g\in  \Lambda K_{\Psi}$, where $g\in K_{\Psi}$. Consider

\begin{align*}
\|\mathcal{C}f\|^{2}
&=\langle
\mathcal{C}f, \mathcal{C}f
\rangle\\
&=\langle
\Lambda C_\Psi \Lambda^*\Lambda g, \Lambda C_\Psi \Lambda^*\Lambda g
\rangle \\
&=\langle
 C_\Psi g, C_\Psi g
\rangle \\
&=\langle
 g, g
\rangle\\
&=\langle
 \Lambda g, \Lambda g
\rangle\\
&=\langle
 f, f
\rangle\\
&=\|f\|^{2}.
\end{align*}
Hence, $\mathcal{C}$ is a conjugation, which follows that $C_{\Lambda, \Psi}$ is a conjugation.
\end{proof}

\medskip

\begin{theorem}\label{ctheta}
Let $\Lambda,\Theta$, and $\Psi$ be $\Gamma$-symmetric inner functions such that $\Theta=\Lambda\Psi$. Then, for
any $f_{\Lambda}\in K_{\Lambda}$ and $f_{\Psi}\in K_{\Psi}$, the conjugation
$C_{\Theta}:K_{\Theta}=K_{\Lambda}\oplus\Lambda K_{\Psi}\to K_{\Theta}=K_{\Psi}\oplus \Psi K_{\Lambda}$ has the following properties:
\begin{enumerate}[(1)]
    \item $C_{\Theta}(f_{\Lambda}+\Lambda f_{\Psi})=C_{\Psi}(f_{\Psi})+\Psi C_{\Lambda}(f_{\Lambda}).$
    \item $C_{\Theta}(f_{\Psi}+\Psi f_{\Lambda})=C_{\Lambda}(f_{\Lambda})+\Lambda C_{\Psi}(f_{\Psi}).$
\end{enumerate}
\end{theorem}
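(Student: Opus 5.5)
The plan is a direct computation on the boundary, using the definition $(C_{\Theta}f)(z)=\Theta(z)\bar z\,\Gamma(f(z))$ and its analogues for $C_\Lambda$ and $C_\Psi$. Two facts make it work. First, $\Gamma$ is antilinear and involutive, so $\Gamma(A x)=(\Gamma A\Gamma)\Gamma x$ for $A\in\oL$. Second, $\Gamma$-symmetry of an inner function $\F$ means $\Gamma\F(z)\Gamma=\F(z)^*$ a.e. on $\mathbb{T}$. By Proposition \ref{uniq} we may also use $\Theta=\Lambda\Psi=\Psi\Lambda$. Both decompositions $K_\Theta=K_\Lambda\oplus\Lambda K_\Psi=K_\Psi\oplus\Psi K_\Lambda$ then hold by part (1) of the decomposition proposition, the second one applied to $\Theta=\Psi\Lambda$.

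For (1), linearity over sums (additivity of $\Gamma$) lets us treat the two pieces separately.

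For the piece $f_\Lambda$:
\[
C_\Theta f_\Lambda=\Psi(z)\Lambda(z)\bar z\,\Gamma(f_\Lambda(z))=\Psi\, C_\Lambda f_\Lambda .
\]
Here we write $\Theta=\Psi\Lambda$.

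For the piece $\Lambda f_\Psi$:
\[
C_\Theta(\Lambda f_\Psi)=\Lambda\Psi\bar z\,\Gamma\Lambda\Gamma\,\Gamma f_\Psi=\Lambda\Psi\Lambda^*\bar z\,\Gamma f_\Psi .
\]
Here we write $\Theta=\Lambda\Psi$ and use $\Gamma\Lambda\Gamma=\Lambda^*$. Commutativity of $\Lambda$ and $\Psi$ gives $\Lambda\Psi\Lambda^*=\Psi\Lambda\Lambda^*=\Psi$, using that $\Lambda(z)$ is unitary a.e. Hence $C_\Theta(\Lambda f_\Psi)=\Psi\bar z\,\Gamma f_\Psi=C_\Psi f_\Psi$.

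Adding the two pieces gives (1). The right-hand side lies in $K_\Psi\oplus\Psi K_\Lambda$, since $C_\Psi$ and $C_\Lambda$ preserve $K_\Psi$ and $K_\Lambda$ respectively; this is the remark following \eqref{CTheta}.

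Statement (2) follows by the identical computation with the roles of $\Lambda$ and $\Psi$ interchanged, which is legitimate because the hypotheses are symmetric once $\Lambda\Psi=\Psi\Lambda$ is known. Alternatively, apply $C_\Theta$ to (1), use $C_\Theta^2=I$ and rename $C_\Lambda f_\Lambda$, $C_\Psi f_\Psi$ (as $C_\Lambda,C_\Psi$ are bijective involutions on $K_\Lambda,K_\Psi$).

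The only delicate step is moving $\Gamma$ past the matrix function $\Lambda(z)$, which is where $\Gamma$-symmetry enters, together with the cancellation $\Lambda\Psi\Lambda^*=\Psi$. Without Proposition \ref{uniq} this cancellation fails, so commutativity is the crux. Everything else is pointwise algebra a.e. on $\mathbb{T}$.
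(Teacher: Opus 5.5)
Your proposal is correct and follows essentially the paper's own argument. Both are a pointwise computation on $\mathbb{T}$ that moves $\Gamma$ past $\Lambda$ via $\Gamma\Lambda\Gamma=\Lambda^*$ and then cancels with $\Theta\Lambda^*=\Psi$; that identity rests on the commutativity from Proposition \ref{uniq}, which you state explicitly and the paper uses without comment, and in both (2) is obtained by symmetry (your alternative derivation of (2) from (1) via $C_\Theta^2=I$ is also valid).
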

\begin{proof}
(1) Let $f_{\Lambda}\in K_{\Lambda}$ and $f_{\Psi}\in K_{\Psi}$. Then,
\begin{align*}
C_{\Theta}(f_{\Lambda}+\Lambda f_{\Psi})
&=\Theta \overline{z}\Gamma(f_{\Lambda}+\Lambda f_{\Psi})\\
&=\Theta \overline{z}\Gamma(f_{\Lambda})+\Theta \overline{z}\Gamma(\Lambda f_{\Psi})\\
&=\Theta\Lambda^{*}\Lambda \overline{z}\Gamma(f_{\Lambda})+\Theta \overline{z}\Lambda^{*}\Gamma(f_{\Psi})\\
&=\Psi C_{\Lambda} (f_{\Lambda})+\Theta \Lambda^{*}\overline{z}\Gamma(f_{\Psi})\\
&=\Psi C_{\Lambda} (f_{\Lambda})+\Psi\overline{z}\Gamma(f_{\Psi})\\
&=\Psi C_{\Lambda} (f_{\Lambda})+C_{\Psi}(f_{\Psi}).
\end{align*}
(2) This part is a symmetric version of (1).
\end{proof}

\medskip

\begin{proposition}
Let $\Lambda,\Theta$, and $\Psi$ be $\Gamma$-symmetric inner functions such that $\Theta=\Lambda\Psi$. Then, the operators $C_{\Lambda,\Psi}C_{\Theta}:K_{\Theta}=K_{\Psi}\oplus\Psi K_{\Lambda}\to K_{\Theta}=K_{\Lambda}\oplus \Lambda K_{\Psi}$ and
$C_{\Theta}C_{\Lambda,\Psi}:K_{\Theta}=K_{\Lambda}\oplus\Lambda K_{\Psi}\to K_{\Theta}=K_{\Psi}\oplus \Psi K_{\Lambda}$ are unitary, and have the following properties:
\begin{enumerate}[(1)]
    \item $C_{\Lambda,\Psi}C_{\Theta}=P_{\Lambda}\Psi^{*}+\Lambda P_{\Psi},$
    \item $C_{\Theta}C_{\Lambda,\Psi}=P_{\Psi}\Lambda^{*}+\Psi P_{\Lambda}.$
\end{enumerate}
\end{proposition}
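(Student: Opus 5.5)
Unitarity is immediate: $C_{\Lambda,\Psi}$ (Theorem \ref{conj}) and $C_{\Theta}$ are both conjugations on $K_\Theta$, i.e.\ antilinear, isometric and involutive. Their product is therefore linear and isometric, and it is surjective with inverse $C_{\Theta}C_{\Lambda,\Psi}$. Hence both products are unitary on $K_\Theta$, and each is the inverse (so the adjoint) of the other. This already shows that (2) follows from (1) by taking adjoints:
\[
(P_\Lambda\Psi^*+\Lambda P_\Psi)^*=\Psi P_\Lambda+P_\Psi\Lambda^*,
\]
where we use that $P_\Lambda,P_\Psi$ are self-adjoint and $(M_\Psi)^*=M_{\Psi^*}$. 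This must be read as an identity of operators on $K_\Theta$, with the right-hand sides compressed appropriately. Alternatively, (2) is proved exactly like (1), with the roles of $\Lambda$ and $\Psi$ swapped; this uses $\Theta=\Psi\Lambda$ from Proposition \ref{uniq}.

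For (1) I will compute on the decomposition $K_\Theta=K_\Psi\oplus\Psi K_\Lambda$, which is valid by item (1) of the decomposition proposition applied to $\Theta=\Psi\Lambda$. Take $f=f_\Psi+\Psi f_\Lambda$ with $f_\Psi\in K_\Psi$ and $f_\Lambda\in K_\Lambda$. By Theorem \ref{ctheta}(2),
\[
C_\Theta f=C_\Lambda f_\Lambda+\Lambda C_\Psi f_\Psi ,
\]
and this element lies in $K_\Lambda\oplus\Lambda K_\Psi$. Applying $C_{\Lambda,\Psi}=C_\Lambda\oplus\Lambda C_\Psi\Lambda^*$ componentwise then gives
\[
C_{\Lambda,\Psi}C_\Theta f=C_\Lambda^2 f_\Lambda+\Lambda C_\Psi^2 f_\Psi=f_\Lambda+\Lambda f_\Psi .
\]

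It remains to check that the right-hand side of (1) produces the same vector, i.e.\ that $P_\Lambda\Psi^* f=f_\Lambda$ and $\Lambda P_\Psi f=\Lambda f_\Psi$.

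For the first identity, $\Psi^*f=\Psi^*f_\Psi+f_\Lambda$. We need $\Psi^* f_\Psi\perp K_\Lambda$. Since $f_\Psi\perp \Psi H^2(\hsp)$, the function $\Psi^*f_\Psi$ is orthogonal to $H^2(\hsp)$, hence to $K_\Lambda$. Therefore $P_\Lambda\Psi^*f=f_\Lambda$.

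For the second identity, $P_\Psi f=f_\Psi+P_\Psi(\Psi f_\Lambda)$, and $\Psi f_\Lambda\in\Psi H^2(\hsp)\perp K_\Psi$. So $P_\Psi f=f_\Psi$, and hence $\Lambda P_\Psi f=\Lambda f_\Psi$.

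Adding the two pieces gives $(P_\Lambda\Psi^*+\Lambda P_\Psi)f=f_\Lambda+\Lambda f_\Psi$, which proves (1).

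The main point of care is these orthogonality checks. They require interpreting $P_\Lambda$ as the projection of $L^2(\hsp)$, since $\Psi^*f$ need not be analytic. They also require the $\Gamma$-symmetry and commutation $\Lambda\Psi=\Psi\Lambda$, which is what makes both decompositions of $K_\Theta$ available at once. Everything else is bookkeeping with $C_\Lambda^2=I$ and $C_\Psi^2=I$.
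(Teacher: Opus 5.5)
Your proof is correct and follows the same route as the paper: you compute $C_{\Lambda,\Psi}C_\Theta(f_\Psi+\Psi f_\Lambda)=f_\Lambda+\Lambda f_\Psi$ via Theorem \ref{ctheta}(2), check that $P_\Lambda\Psi^*+\Lambda P_\Psi$ sends the same vector to the same result, and treat (2) symmetrically. Beyond the paper, you also justify the orthogonality relations $\Psi^*f_\Psi\perp H^2(\hsp)$ and $\Psi f_\Lambda\perp K_\Psi$, and you prove unitarity by noting that a product of two conjugations is a linear isometry whose inverse is $C_\Theta C_{\Lambda,\Psi}$; the paper dismisses both points as ``easy''.
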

\begin{proof}
It is easy to prove that $C_{\Lambda,\Psi}C_{\Theta}$ and $C_{\Lambda,\Psi}C_{\Theta}$ are unitary operators.

To prove (1), let $f_{\Lambda}\in K_{\Lambda}$ and $f_{\Psi}\in K_{\Psi}$. Then, by Theorem \ref{ctheta}, we have
\begin{align*}
C_{\Lambda,\Psi}C_{\Theta}(f_{\Psi}+\Psi f_{\Lambda})
&=C_{\Lambda,\Psi}(C_{\Lambda}(f_{\Lambda})+\Lambda C_{\Psi} (f_{\Psi}))\\
&=C^{2}_{\Lambda}(f_{\Lambda})+\Lambda C^{2}_{\Psi} (f_{\Psi}))\\
&=f_{\Lambda}+\Lambda f_{\Psi},
\end{align*}
and
\begin{align*}
(P_{\Lambda}\Psi^{*}+\Lambda P_{\Psi})(f_{\Psi}+\Psi f_{\Lambda})
&=P_{\Lambda} (\Psi^{*}f_{\Psi})+P_{\Lambda} (f_{\Lambda}))+\Lambda P_{\Psi}(f_{\Psi})+\Lambda P_{\Psi}(\Psi f_{\Lambda})\\
&=P_{\Lambda}(f_{\Lambda})+\Lambda P_{\Psi}(f_{\Psi})\\
&=f_{\Lambda}+\Lambda f_{\Psi}.
\end{align*}
It follows (1).

(2) It follows from analogous discussion as in (1).

\end{proof}

\medskip

\begin{theorem}\cite[Theorem 4.8]{CKLP}\label{mzconj}
Let $C$ be an antilinear operator on $L^{2}(\hsp)$. Then the following are equivalent:
\begin{enumerate}[(1)]
    \item $C$ is conjugation on $L^{2}({\hsp})$ such that $M_{z}C=CM_{\bar z}$.
    \item  There is an antilinear function $C_{0}\in L^{\infty}({\oL})$ such that $C=A_{C_{0}}$ and $C_{0}(z)$ is a conjugation for a.e. on $\mathbb{T}$.
        \item For any conjugation $\Gamma$ in $\hsp$, there is $U\in L^{\infty}(\oL)$ such that $U(z)$ is a $\Gamma$-symmetric unitary operator, and $C=M_{U}\widetilde{\J}=\widetilde{\J}M_{U^{*}}$.
\end{enumerate}
\end{theorem}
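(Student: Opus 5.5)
The plan is to prove the cycle of implications $(1)\Rightarrow(3)\Rightarrow(2)\Rightarrow(1)$, working with the fixed conjugation $\widetilde{\J}$ from \eqref{J1}. Throughout I read $A_{C_0}$ in (2) as the pointwise operator $(A_{C_0}f)(z)=C_0(z)f(z)$ a.e. on $\mathbb{T}$. I also use two facts about $\widetilde{\J}$: it is a conjugation on $L^2(\hsp)$, and it commutes with multiplication by real scalar functions.

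\emph{$(1)\Rightarrow(3)$.} Put $V=C\widetilde{\J}$. This is a linear operator, and it is unitary because it is a product of two antiunitary involutions. Since $\Gamma$ acts pointwise, $\widetilde{\J}M_z=M_{\bar z}\widetilde{\J}$. Combining this with $M_zC=CM_{\bar z}$ gives
$$VM_z=C\widetilde{\J}M_z=CM_{\bar z}\widetilde{\J}=M_zC\widetilde{\J}=M_zV.$$
A bounded operator on $L^2(\hsp)$ that commutes with $M_z$ (and hence with $M_{\bar z}=M_z^{-1}$) is a multiplication operator $M_U$ with $U\in L^\infty(\oL)$. This is the standard characterization of the commutant of the bilateral shift of finite multiplicity. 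Because $M_U$ is unitary, $U(z)$ is unitary a.e. Then $C=M_U\widetilde{\J}$. From $C^2=I$ we get $M_U\widetilde{\J}M_U\widetilde{\J}=I$, and since $\widetilde{\J}M_U\widetilde{\J}=M_{U_\Gamma}$ this reads $U(z)\Gamma U(z)\Gamma=I$ a.e. Hence $\Gamma U\Gamma=U^{-1}=U^*$, so $U$ is $\Gamma$-symmetric. The same identity gives $\widetilde{\J}M_{U^*}=M_{\Gamma U^*\Gamma}\widetilde{\J}=M_U\widetilde{\J}$, which is the second form in (3).

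\emph{$(3)\Rightarrow(2)$.} Set $C_0(z)=U(z)\Gamma$. This is an antilinear isometry on $\hsp$, and
$$C_0(z)^2=U(z)\Gamma U(z)\Gamma=U(z)U(z)^*=I,$$
so $C_0(z)$ is a conjugation a.e. It is essentially bounded and measurable, and $C=M_U\widetilde{\J}$ is exactly the pointwise operator $f\mapsto C_0(\cdot)f(\cdot)$.

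\emph{$(2)\Rightarrow(1)$.} Pointwise action gives antilinearity, $C^2=I$, and
$$\langle Cf,Cg\rangle=\int_{\mathbb{T}}\langle C_0f,C_0g\rangle\,dm=\int_{\mathbb{T}}\langle g,f\rangle\,dm=\langle g,f\rangle.$$
Finally, antilinearity gives $C(zf)=C_0(z)\,zf(z)=\bar z\,C_0(z)f(z)$, which is the intertwining relation between $C$ and multiplication by $z$.

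The main obstacle is the step in $(1)\Rightarrow(3)$ where an operator commuting with $M_z$ is shown to be $M_U$. I would cite the commutant theorem from \cite{NF} or \cite{RR} rather than reprove it. If a proof is wanted, it goes as follows: define $U(z)x=(Vx)(z)$ for constant functions $x$, extend to trigonometric polynomials using the commutation with $M_z$ and $M_{\bar z}$, and use the boundedness of $V$ together with $\dim\hsp<\infty$ to get $\|U\|_\infty\le\|V\|$.
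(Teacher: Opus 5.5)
The paper gives no proof of this statement. It is quoted from \cite[Theorem 4.8]{CKLP} and then only applied, to $\C_\Theta$ and $\C_{\Lambda,\Psi}$. So there is no internal argument to compare against.

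Judged on its own, your cycle $(1)\Rightarrow(3)\Rightarrow(2)\Rightarrow(1)$ is correct. The only non-formal input is the commutant of the bilateral shift, $\{M_z\}'=\{M_U: U\in L^\infty(\oL)\}$. Your sketch of it is the standard one: define $U(z)x=(Vx)(z)$, use commutation with $M_z^{\pm 1}$ on vector trigonometric polynomials, then bound $\|U(z)\|\le\|V\|$ a.e. using a countable dense subset of $\hsp$. Everything else is pointwise algebra based on $\widetilde{\J}M_F\widetilde{\J}=M_{F_\Gamma}$.

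Three minor points:
\begin{itemize}
\item In $(2)\Rightarrow(1)$ you verified $CM_z=M_{\bar z}C$, whereas (1) is phrased as $M_zC=CM_{\bar z}$. These are equivalent: compose with $M_z$ on the left and $M_{\bar z}$ on the right. Alternatively, compute $C(\bar z f)=zC_0f$ directly.
\item Since $\Gamma$ in $(1)\Rightarrow(3)$ was arbitrary, you do obtain the ``for any conjugation $\Gamma$'' quantifier. The step $(3)\Rightarrow(2)$ only needs some conjugation on $\hsp$ to exist.
\item Your remark that $\widetilde{\J}$ commutes with real scalar multipliers is never used and can be dropped.
\end{itemize}
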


 We know that $\C_{\Theta}=\J^*\tau_\Theta$, where $\J^*$ is an $M_{z}$-commuting conjugation. Hence, $\C_{\Theta}$ is an $M_{z}$-conjugation, i.e., $\C_{\Theta}M_{z}=M_{\bar z}\C_{\Theta}$. It follows from Theorem \ref{mzconj} that there is $U\in L^{\infty}(\oL)$ such that $U(z)$ is a $\Gamma$-symmetric unitary operator, and $\C_{\Theta}=M_{U}\widetilde{\J}=\widetilde{\J}M_{U^{*}}$.

\begin{theorem}
Let $\Lambda,\Theta$, and $\Psi$ be $\Gamma$-symmetric inner functions such that $\Theta=\Lambda\Psi$. Then, there are $\Gamma$-symmetric unitary functions $U, V\in L^{\infty}(\oL)$ such that $$\C_{\Lambda,\Psi}=M_{U}\widetilde{\J}\oplus M_{V}\widetilde{\J}=\widetilde{\J}M_{U^{*}}\oplus \widetilde{\J}M_{V^{*}}$$.
\end{theorem}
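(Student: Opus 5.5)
The plan is to apply the observation made just after Theorem~\ref{mzconj} separately to each block of $\C_{\Lambda,\Psi}=\C_{\Lambda}\oplus\Lambda\C_{\Psi}\Lambda^{*}$ from Theorem~\ref{conj}.

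\emph{First block.} Since $\Lambda$ is $\Gamma$-symmetric, $\C_\Lambda$ is a conjugation on $L^2(\hsp)$. It factors as $\C_\Lambda=\J^*\tau_\Lambda$, so it satisfies $\C_\Lambda M_z=M_{\bar z}\C_\Lambda$. Theorem~\ref{mzconj} then provides a $\Gamma$-symmetric unitary $U\in L^\infty(\oL)$ with $\C_\Lambda=M_U\widetilde{\J}=\widetilde{\J}M_{U^*}$. We can also identify $U$ explicitly. Since $\overline{z}\,\Gamma x=\Gamma(z x)$, we have
\[
(\C_\Lambda f)(z)=\Lambda(z)\overline{z}\,\Gamma f(z)=\big(\overline{z}\Lambda(z)\big)\Gamma f(z),
\]
so $U(z)=\overline{z}\Lambda(z)$. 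This $U$ is unitary a.e., and it is $\Gamma$-symmetric because $\Gamma U\Gamma=z\Gamma\Lambda\Gamma=z\Lambda^*=U^*$.

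\emph{Second block.} The map $\Lambda\C_\Psi\Lambda^*$ extends to all of $L^2(\hsp)$ by the same formula, and we compute it pointwise:
\[
(\Lambda\C_\Psi\Lambda^* f)(z)=\Lambda(z)\Psi(z)\overline{z}\,\Gamma\Lambda(z)^*f(z).
\]
Because $\Lambda$ is $\Gamma$-symmetric, $\Gamma\Lambda^*=\Lambda\Gamma$, so this equals $\overline{z}\Lambda\Psi\Lambda\,\Gamma f$. We therefore take $V(z)=\overline{z}\Lambda(z)\Psi(z)\Lambda(z)$. It is unitary a.e. as a product of unitaries.

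To check that $V$ is $\Gamma$-symmetric, compute
\[
\Gamma V\Gamma=z\,(\Gamma\Lambda\Gamma)(\Gamma\Psi\Gamma)(\Gamma\Lambda\Gamma)=z\Lambda^*\Psi^*\Lambda^*=V^*.
\]
By Proposition~\ref{uniq}, $\Lambda$ and $\Psi$ commute, so also $V=\overline{z}\Theta\Lambda$. This gives $M_V\widetilde{\J}$ for the second block. Alternatively, one can note that $\Lambda\C_\Psi\Lambda^*$ is an antilinear involutive isometry on $L^2(\hsp)$ with $(\Lambda\C_\Psi\Lambda^*)M_z=M_{\bar z}(\Lambda\C_\Psi\Lambda^*)$, since $\Lambda$ commutes with $M_z$, and then invoke Theorem~\ref{mzconj} directly.

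\emph{Second form of each block.} For the identity $M_U\widetilde{\J}=\widetilde{\J}M_{U^*}$, note that $\widetilde{\J}M_{U^*}f=\Gamma U^*f=(\Gamma U^*\Gamma)\Gamma f=U\Gamma f$. The same computation applies to $V$.

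Assembling the two blocks on $K_\Lambda\oplus\Lambda K_\Psi$, each restricted to its summand, gives the claimed decomposition. The main point needing care is the second block: keeping track of the order of the noncommuting factors and checking that $\Gamma$ passes through $\Lambda^*$ correctly. Proposition~\ref{uniq} resolves this.
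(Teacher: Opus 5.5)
Your proposal is correct and follows the paper's route. You treat each block as an $M_z$-conjugation and push $\widetilde{\J}$ through $\Lambda^*$ via $\Gamma\Lambda^*=\Lambda\Gamma$, which gives $V=\Lambda U'\Lambda$ where $\C_\Psi=M_{U'}\widetilde{\J}$. The differences are minor: you write $U=\overline{z}\Lambda$ and $U'=\overline{z}\Psi$ explicitly instead of obtaining them from Theorem~\ref{mzconj}, and you correctly use $\C_\Lambda$ rather than $\C_\Theta$ for the first block. Your appeal to Proposition~\ref{uniq} is unnecessary, since the palindromic product $\Lambda\Psi\Lambda$ is $\Gamma$-symmetric without any commutativity.
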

\begin{proof}
Denote $\M_{z}=\begin{bmatrix}
M_{z}&0\\
0&M_{z}
\end{bmatrix}
$.
Let $\Lambda, \Theta$, and $\Psi$ be $\Gamma$-symmetric inner functions such that $\Theta=\Lambda\Psi$. Then,
 $\C_{\Theta}$ and $\C_{\Psi}$ are $M_{z}$- conjugations. It implies that the conjugation

 $$\C_{\Lambda,\Psi}=C_{\Lambda}\oplus \Lambda C_{\Psi}\Lambda^{*}=\begin{bmatrix}
\C_{\Lambda}&0\\
0&\Lambda \C_{\Psi}\Lambda^{*}
\end{bmatrix}$$
 is also an $\M_{z}$-conjugation. By Theorem \ref{mzconj}, there are $\Gamma$-symmetric unitary functions $U, U^{\prime}\in L^{\infty}(\oL)$ such that $\C_{\Theta}=M_{U}\widetilde{\J}=\widetilde{\J}M_{U^{*}}$, and $\C_{\Psi}=M_{U^{\prime}}\widetilde{\J}=\widetilde{\J}M_{U^{\prime*}}$.

It follows that
\begin{align*}
\C_{\Lambda,\Psi}
&=C_{\Lambda}\oplus \Lambda C_{\Psi}\Lambda^{*}\\
&=\begin{bmatrix}
\C_{\Lambda}&0\\
0&\Lambda \C_{\Psi}\Lambda^{*}
\end{bmatrix}\\
&=\begin{bmatrix}
M_{U}\widetilde{\J}&0\\
0&\Lambda M_{U^{\prime}}\widetilde{\J}\Lambda^{*}
\end{bmatrix}\\
&=\begin{bmatrix}
M_{U}\widetilde{\J}&0\\
0&\Lambda M_{U^{\prime}}\Lambda\widetilde{\J}
\end{bmatrix}\\
&=\begin{bmatrix}
M_{U}\widetilde{\J}&0\\
0&M_{V}\widetilde{\J}
\end{bmatrix}\\
&=\begin{bmatrix}
M_{U}&0\\
0&M_{V}
\end{bmatrix}
\begin{bmatrix}
\widetilde{\J}&0\\
0&\widetilde{\J}
\end{bmatrix}\\
&=M_{U}\widetilde{\J}\oplus M_{V}\widetilde{\J}
\end{align*}
where $V=\Lambda M_{U^{\prime}}\Lambda$ is unitary function in $L^{\infty}(\oL)$. It is easy to verify that $V$ is also $\Gamma$-symmetric because $U^{\prime}$ and $\Lambda$ are $\Gamma$-symmetric. The same steps follow for the proof of other representation, i.e., $\C_{\Lambda,\Psi}=\widetilde{\J}M_{U^{*}}\oplus \widetilde{\J}M_{V^{*}}$.
\end{proof}
%\section{\bf{MATTO's and conjugations}}

\section{\bf{Connection with matrix valued Hankel operators}}

In this section, we find the connection between the conjugation, matrix valued asymmetric truncated Toeplitz operators, and matrix valued Hankel operators.

Let $P_{-}=I-P_{+}$ denote the orthogonal projection from $L^{2}(\hsp)$ on $H_{-}^{2}(\hsp)=[H^{2}(\hsp)]^{\perp}$. For $\Phi\in L^{2}(\oL)$, we define
$$H_{\Phi}: H^{2}(\hsp)\to H^{2}_{-}(\hsp),\quad \quad H_{\Phi}f=P_{-}(\Phi f);$$
for $f\in H^{2}(\hsp)$.

Similarly, for $\Psi\in L^{\infty}(\oL)$,
$$\widetilde{H}_{\Psi}:  H^{2}_{-}(\hsp)\to H^{2}(\hsp),\quad \quad \widetilde{H}_{\Psi}f=P_{+}(\Psi f).$$

\begin{remark}
Let $f\in K_{\Theta}$ and $\Phi\in L^{\infty}(\oL)$. Then
\begin{align*}
\widetilde{H}_{\Theta}H_{\Theta^{*}\Phi}f
&=\widetilde{H}_{\Theta}P_{-}(\Theta^{*}\Phi f)\\
&=P_{+}(\Theta P_{-}(\Theta^{*}\Phi f))\\
&=P_{+}(\Theta (I-P_{+})(\Theta^{*}\Phi f))\\
&=(P_{+}-\Theta P_{+}\Theta^{*})(\Phi f)\\
&=P_{\Theta}(\Phi f)\\
&=A_{\Phi}^{\Theta}f.
\end{align*}
\end{remark}

\begin{proposition}
Let $\Lambda, \Theta$, and $\Psi$ be $\Gamma$-symmetric inner functions such that $\Theta=\Lambda\Psi$. Then
\begin{enumerate}[(1)]
    \item $P_{\Theta}=\Theta P_{-}\Theta^{*}-P_{-},$
    \item $P_{\Theta}=\Theta P_{-}\Theta^{*}P_{+},$
    \item $P_{\Theta}=P_{+}\Lambda P_{-}\Lambda^{*}+\Lambda P_{\Psi} \Lambda^{*}=P_{+}\Theta P_{-}\Theta^{*},$
\end{enumerate}
\end{proposition}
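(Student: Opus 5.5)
The plan is to reduce every identity to the single formula
$$P_{\Theta}=P_{+}-\Theta P_{+}\Theta^{*}\quad\text{on }L^{2}(\hsp),$$
which holds because $\Theta$ is inner: $M_\Theta$ is an isometry on $L^2(\hsp)$, so $\Theta P_+\Theta^*$ is the orthogonal projection onto $\Theta H^2(\hsp)$. Since $\Theta H^2(\hsp)\subset H^2(\hsp)$, subtracting it from $P_+$ gives the projection onto $K_\Theta$. This is the same computation already used in the Remark above. Two elementary facts will be used throughout. First, $\Theta\Theta^{*}=I$ a.e. on $\mathbb{T}$. Second, $\Theta^{*}$ maps $H^{2}_{-}(\hsp)$ into itself, because $\Theta^*$ has only nonpositive Fourier coefficients; equivalently, $P_{+}\Theta^{*}P_{-}=0$. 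The $\Gamma$-symmetry hypothesis should not actually be needed; only the factorization $\Theta=\Lambda\Psi$ with inner factors matters.

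For (1), I write $P_-=I-P_+$ and expand:
$$\Theta P_{-}\Theta^{*}-P_{-}=\Theta\Theta^{*}-\Theta P_{+}\Theta^{*}-I+P_{+}=P_{+}-\Theta P_{+}\Theta^{*}=P_{\Theta}.$$

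For (2), I expand $\Theta P_{-}\Theta^{*}P_{+}=P_{+}-\Theta P_{+}\Theta^{*}P_{+}$. It then remains to show $\Theta P_{+}\Theta^{*}P_{+}=\Theta P_{+}\Theta^{*}$. This follows by writing $I=P_++P_-$ on the right and using $P_{+}\Theta^{*}P_{-}=0$.

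For (3), the second equality comes from
$$P_{+}\Theta P_{-}\Theta^{*}=P_{+}-P_{+}\Theta P_{+}\Theta^{*}=P_{+}-\Theta P_{+}\Theta^{*},$$
since the range of $\Theta P_+\Theta^*$ lies in $\Theta H^2(\hsp)\subset H^2(\hsp)$ and $P_+$ acts as the identity there. Applying this same identity with $\Lambda$ in place of $\Theta$ gives $P_{+}\Lambda P_{-}\Lambda^{*}=P_{\Lambda}$. The first equality in (3) is then exactly item (2) of the preceding decomposition Proposition, $P_{\Theta}=P_{\Lambda}+\Lambda P_{\Psi}\Lambda^{*}$.

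There is no real obstacle. The only point that needs care is the domain: I interpret every operator as acting on all of $L^{2}(\hsp)$, with $P_\Theta$ extended as the orthogonal projection of $L^2(\hsp)$ onto $K_\Theta$. With that interpretation, the vanishing $P_{+}\Theta^{*}P_{-}=0$ in (2) is the one nontrivial ingredient, and it is immediate from the Fourier expansion of $\Theta^{*}$ given in Section~\ref{s2}.
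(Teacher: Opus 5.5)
Your proof is correct and follows the paper's own one-line argument: everything reduces to $P_{\Theta}=P_{+}-\Theta P_{+}\Theta^{*}$ together with $P_{-}=I-P_{+}$. You also supply the details the paper leaves out, namely $\Theta\Theta^{*}=I$, the vanishing $P_{+}\Theta^{*}P_{-}=0$ used for (2), and the decomposition $P_{\Theta}=P_{\Lambda}+\Lambda P_{\Psi}\Lambda^{*}$ used for the first equality in (3), and you are right that the $\Gamma$-symmetry hypothesis plays no role.
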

\begin{proof}
By using the identities $P_{\Theta}=P_{+}-\Theta P_{+}\Theta^{*}$ and $P_{-}=I-P_{+}$, the required proof follows.
\end{proof}

\medskip

\begin{theorem}\cite[Theorem 4.5]{RYJ}\label{LT}
Let $\Theta_{1}, \Theta_{2}\in H^{\infty}(\oL)$ be two inner functions, and let $\Gamma$ be a conjugations in $\hsp$ such that $\Theta_{1}$ and $\Theta$ are $\Gamma$-symmetric. A bounded linear operator $A:K_{\Theta_{1}}\to K_{\Theta_{2}}$ belongs to $\mathcal{MT}(\Theta_{1}, \Theta_{2})$ if and only if $C_{\Theta_{2}}AC_{\Theta_{1}}$ belongs to $\mathcal{MT}(\Theta_{1}, \Theta_{2})$. More precisely, $A=A_{\Phi}^{\Theta_{1}, \Theta_{2}}\in \mathcal{MT}(\Theta_{1}, \Theta_{2})$ if and only if $C_{\Theta_{2}}AC_{\Theta_{1}}=A_{\Psi}^{\Theta_{1},\Theta_{2}}\in \mathcal{MT}(\Theta_{1}, \Theta_{2})$ with
\begin{equation}
\Psi(z)=\Gamma\Theta_{2}(z)^*\Phi(z)\Theta_{1}(z)\Gamma
=\Theta_{2}(z)\Gamma\Phi(z)\Gamma\Theta_{1}(z)^*\quad \text{a.e. on } \mathbb{T}.
\end{equation}
\end{theorem}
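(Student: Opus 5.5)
The plan is to verify directly that conjugating a MATTO by the two natural conjugations produces a MATTO with the stated symbol, working on the dense subspace $K_{\Theta_1}\cap H^\infty(\hsp)$ first and then extending by continuity. The converse direction will then come for free from the involutive nature of the construction. (In the hypothesis, ``$\Theta$'' should read $\Theta_2$; both $\Theta_1,\Theta_2$ are taken $\Gamma$-symmetric, so $\C_{\Theta_1}$ and $\C_{\Theta_2}$ are conjugations preserving $K_{\Theta_1}$ and $K_{\Theta_2}$ respectively.)

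Fix $A=A_\Phi^{\Theta_1,\Theta_2}$ and $f\in K_{\Theta_1}\cap H^\infty(\hsp)$. Write $g=\C_{\Theta_1}f=\Theta_1\bar z\Gamma f$, which again lies in $K_{\Theta_1}$; one checks it is bounded as well, since $\Gamma$ acts pointwise isometrically.

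The first key step is a commutation identity: $\C_{\Theta_2}P_{\Theta_2}h=P_{\Theta_2}\C_{\Theta_2}h$ for $h\in L^2(\hsp)$. To prove it, recall that $\C_{\Theta_2}$ maps $\Theta_2H^2(\hsp)$ onto $H^2(\hsp)^\perp$, and by involutivity maps $H^2(\hsp)^\perp$ onto $\Theta_2H^2(\hsp)$. It also maps $K_{\Theta_2}$ onto itself. Since $L^2(\hsp)=H^2(\hsp)^\perp\oplus K_{\Theta_2}\oplus\Theta_2H^2(\hsp)$ and $\C_{\Theta_2}$ is an antiunitary respecting this splitting (swapping the outer summands), it commutes with the projection onto the middle summand.

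Using this identity,
\[
\C_{\Theta_2}A\C_{\Theta_1}f=P_{\Theta_2}\bigl(\Theta_2\bar z\,\Gamma(\Phi\,\Theta_1\bar z\,\Gamma f)\bigr)=P_{\Theta_2}\bigl(\Theta_2\,\Gamma\Phi\Gamma\,\Gamma\Theta_1\Gamma\, f\bigr).
\]
Here the two factors $\bar z$ cancel: one passes through $\Gamma$ as $z$ by antilinearity and meets the other, giving $\bar z z=1$. By $\Gamma$-symmetry, $\Gamma\Theta_1\Gamma=\Theta_1^*$, so the symbol is $\Theta_2\,\Gamma\Phi\Gamma\,\Theta_1^*$. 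The same symmetry for $\Theta_2$ lets one rewrite $\Theta_2=\Gamma\Theta_2^*\Gamma$ and $\Theta_1^*=\Gamma\Theta_1\Gamma$, which gives the first form $\Gamma\Theta_2^*\Phi\Theta_1\Gamma$. This symbol lies in $L^2(\oL)$ because $\Theta_i$ are bounded.

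Boundedness carries over since $\C_{\Theta_1}$ and $\C_{\Theta_2}$ are isometric, so $\C_{\Theta_2}A\C_{\Theta_1}\in\mathcal{MT}(\Theta_1,\Theta_2)$. For the converse, apply the same argument to $\C_{\Theta_2}A\C_{\Theta_1}$ and use $\C_{\Theta_i}^2=I$.

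The main obstacle is domain bookkeeping. The symbol $\Phi$ is only $L^2$, so $A$ is a priori defined only on bounded functions, and one must make sure $\C_{\Theta_1}$ maps $K_{\Theta_1}\cap H^\infty(\hsp)$ into itself. Pointwise isometry of $\Gamma$ settles this, after which density of $K_{\Theta_1}^\infty$ completes the argument.
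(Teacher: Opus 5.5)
Your proof is correct and complete. The paper gives no proof of its own here (it quotes the result from [RYJ, Theorem 4.5]), and your direct verification on $K_{\Theta_1}^\infty$ is the natural argument: you check that $\C_{\Theta_1}$ preserves $K_{\Theta_1}^\infty$, commute $\C_{\Theta_2}$ past $P_{\Theta_2}$, simplify the symbol pointwise via $\Gamma$-symmetry, and then extend by density.

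The one detail left implicit concerns the converse of the ``more precisely'' part. There you should observe that $\Phi\mapsto\Theta_2\Gamma\Phi\Gamma\Theta_1^*$ is an involution a.e. on $\mathbb{T}$, because $\Gamma\Theta_2\Gamma=\Theta_2^*$, $\Gamma\Theta_1^*\Gamma=\Theta_1$ and each $\Theta_i(z)$ is unitary. Consequently, applying the forward direction to $A_\Psi^{\Theta_1,\Theta_2}$ returns exactly $A_\Phi^{\Theta_1,\Theta_2}$.
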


\medskip

 For the conjugation $C_{\Lambda,\Psi}$, we no longer have the equality in general, i.e.,  $A_{\Phi}^{\Theta,\Lambda}C_{\Lambda,\Theta}\neq C_{\Lambda, \Theta}A_{\Phi^{*}}^{\Lambda,\Theta}$. In the following theorem, we find the difference of these two operators.
\begin{theorem}
Let $\Lambda, \Theta$, and $\Psi$ be $\Gamma$-symmetric inner functions such that $\Theta=\Psi\Lambda$. Let $A_{\Phi}^{\Theta, \Lambda}\in \mathcal{T}(\Theta,\Lambda)$ and $f\in K_{\Theta}$. Then we have
$$(A_{\Phi}^{\Theta,\Lambda}C_{\Lambda,\Theta}-C_{\Lambda, \Theta}A_{\Phi^{*}}^{\Lambda,\Theta})f=(\widetilde{H}_{\Lambda}H_{\Phi}C_{\Theta}-\widetilde{H}_{\Theta}H_{\Phi}C_{\Lambda}P_{\Lambda})f.$$
\end{theorem}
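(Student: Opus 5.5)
The plan is to reduce both sides to compositions of $P_{+}$, $P_{-}$ and multiplication operators, and then match terms. Throughout I read $C_{\Lambda,\Theta}$ as the conjugation of Theorem \ref{conj} built from the decomposition $K_{\Theta}=K_{\Lambda}\oplus\Lambda K_{\Psi}$. By Theorem \ref{ctheta} and the definition of $C_{\Lambda,\Psi}$, its $K_{\Lambda}$-component is $C_{\Lambda}P_{\Lambda}$. The global conjugation on $K_{\Theta}$ is $C_{\Theta}$. Two facts will be used repeatedly:
\begin{itemize}
\item the $\Gamma$-symmetry relations $\Gamma\Lambda\Gamma=\Lambda^{*}$ and $\Gamma\Theta\Gamma=\Theta^{*}$;
\item the identities $C_{\Theta}=\Theta\bar z\Gamma$ and $C_{\Lambda}=\Lambda\bar z\Gamma$, valid pointwise a.e. on $\mathbb{T}$.
\end{itemize}

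\textbf{Step 1: express the compressions through Hankel operators.} For $g$ in the relevant model space, write $A_{\Phi}^{\Theta,\Lambda}g=P_{\Lambda}(\Phi g)$. Then use $P_{\Lambda}=P_{+}-\Lambda P_{+}\Lambda^{*}$ together with $P_{+}=I-P_{-}$. Exactly as in the Remark preceding the statement, this turns every compression into a sum of terms of the form $P_{+}\Lambda P_{-}(\cdots)$ or $P_{+}\Theta P_{-}(\cdots)$. Such terms are precisely $\widetilde H_{\Lambda}H_{(\cdot)}$ or $\widetilde H_{\Theta}H_{(\cdot)}$. The relevant identities are items (1)–(3) of the last Proposition, in particular $P_{\Theta}=P_{+}\Theta P_{-}\Theta^{*}$ and $P_{\Lambda}=P_{+}\Lambda P_{-}\Lambda^{*}$.

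\textbf{Step 2: expand the first term.} Take $f=f_{\Lambda}+\Lambda f_{\Psi}$ and compute $A_{\Phi}^{\Theta,\Lambda}C_{\Lambda,\Theta}f$. Writing $P_{\Lambda}\Phi=P_{+}\Phi-\Lambda P_{+}\Lambda^{*}\Phi$, the piece $P_{+}\Phi$ splits as $\Phi-P_{-}\Phi$. Combining the contribution from $C_{\Lambda}P_{\Lambda}f$ with the one from $\Lambda C_{\Psi}f_{\Psi}$ via Theorem \ref{ctheta}(1), so that $C_{\Theta}f=C_{\Psi}f_{\Psi}+\Psi C_{\Lambda}f_{\Lambda}$, should produce a term $P_{+}\Lambda P_{-}\Phi C_{\Theta}f=\widetilde H_{\Lambda}H_{\Phi}C_{\Theta}f$, plus residual terms.

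\textbf{Step 3: expand the second term.} For $C_{\Lambda,\Theta}A_{\Phi^{*}}^{\Lambda,\Theta}$, I will move the conjugation through the compression. The point is that $C_{\Theta}P_{\Theta}(\Phi^{*}h)$ can be rewritten using $\Gamma\Phi^{*}\Gamma$ and $\Theta\bar z$. Then $P_{\Theta}=P_{+}\Theta P_{-}\Theta^{*}$ gives a term $\widetilde H_{\Theta}H_{\Phi}C_{\Lambda}P_{\Lambda}f$, again plus residuals. This is in the spirit of Theorem \ref{LT}, where the conjugated symbol is $\Theta_{2}\Gamma\Phi\Gamma\Theta_{1}^{*}$.

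\textbf{Step 4: cancel and collect.} The final step is to check that the remaining pure multiplication terms cancel between the two expansions, using that $\Lambda$ and $\Psi$ commute (Proposition \ref{uniq}). The cancelling terms are those of the form $\Phi(\cdot)$ and $\Lambda P_{+}\Lambda^{*}(\cdot)$.

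The main obstacle is bookkeeping the domains. The conjugations act on different model spaces, and each antilinear $\Gamma$ must be commuted past $\Phi$, $\Phi^{*}$ and the inner factors without assuming that $\Phi$ is $\Gamma$-symmetric. I would first verify the identity on the dense set $K_{\Theta}^{\infty}$, and separately on each summand $f_{\Lambda}$ and $\Lambda f_{\Psi}$, before adding the two results.
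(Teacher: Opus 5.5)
Your proposal has a genuine gap, and it lies in the generality you announce in your last paragraph. For a general $\Phi$ the identity is false, so it cannot be proved ``without assuming that $\Phi$ is $\Gamma$-symmetric''.

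Take $\hsp=\mathbb{C}^2$ with $\Gamma$ the coordinatewise conjugation, so that $\Gamma$-symmetric means $A^{T}=A$. Let $\Lambda=\Psi=zI$, $\Theta=z^2I$ and $\Phi\equiv M$ constant; then $M$ commutes with all three. For $f=a+zb$ one has $C_{\Lambda,\Psi}f=\bar a+z\bar b$ and $C_\Theta f=\bar b+z\bar a$. Read the second term as the paper does, namely as $C_{\Lambda,\Psi}A^{\Lambda,\Theta}_{\Phi^*}P_\Lambda f$. Then the left side equals $M\bar a-\overline{M^*a}=(M-M^{T})\bar a$. Both Hankel terms on the right vanish, since $MC_\Theta f$ and $M\bar a$ are analytic.

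Commutation with $\Lambda$ is needed too. Take $\Lambda=\mathrm{diag}(z,1)$, $\Psi=\mathrm{diag}(1,z)$, $\Theta=zI$ and the symmetric constant $\Phi=\begin{pmatrix}0&1\\1&0\end{pmatrix}$, which commutes with $\Theta$ but not with $\Lambda$. At $f=(c_1,c_2)$ the left side is $(\bar c_2,-\bar c_1)$ and the right side is $0$.

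The paper's proof uses these properties of $\Phi$ without stating them:
\begin{itemize}
\item in $A^{\Lambda}_{\Phi}C_{\Lambda}=C_{\Lambda}A^{\Lambda}_{\Phi^*}$;
\item in turning $P_+\Lambda P_-(\Lambda^*\Phi\Lambda\, C_\Psi f_\Psi)$ into $P_+\Lambda P_-(\Phi C_\Psi f_\Psi)$;
\item in replacing $\Phi\Psi$ by $\Psi\Phi$;
\item in $\Omega=\Psi\Gamma\Lambda^*\Phi^*\Lambda\Gamma=\Psi\Phi$.
\end{itemize}
You must therefore add the hypotheses that $\Phi$ is $\Gamma$-symmetric and commutes with $\Lambda$ and $\Psi$, rather than try to avoid them.

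Even granting these hypotheses, your Steps 3--4 misplace the mechanism. The conjugation in the second term is $C_{\Lambda,\Psi}$, not $C_\Theta$. Split $P_\Theta(\Phi^*f_\Lambda)=P_\Lambda(\Phi^*f_\Lambda)+\Lambda P_\Psi(\Lambda^*\Phi^*f_\Lambda)$ and apply $C_\Lambda$, resp.\ $\Lambda C_\Psi\Lambda^*$, to the two pieces.
\begin{itemize}
\item Since $C_\Lambda P_\Lambda=P_\Lambda C_\Lambda$ and $C_\Lambda(\Phi^*u)=\Phi\, C_\Lambda u$, the first piece is $P_\Lambda(\Phi C_\Lambda f_\Lambda)$. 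It cancels the contribution of the $K_\Lambda$-component to $A^{\Theta,\Lambda}_\Phi C_{\Lambda,\Psi}f$. This is the essential cancellation, and it comes from the symmetry of $\Phi$, not from $\Lambda\Psi=\Psi\Lambda$.
\item The second piece is $\Lambda P_\Psi(\Psi\Phi C_\Lambda f_\Lambda)=\Lambda P_\Psi\Lambda^*(\Theta\Phi C_\Lambda f_\Lambda)$; the paper obtains this from Theorem~\ref{LT}.
\item The rest of the first term is $P_\Lambda(\Phi\Lambda C_\Psi f_\Psi)=P_+\Lambda P_-(\Phi C_\Psi f_\Psi)$.
\end{itemize}
Now add and subtract $P_+\Lambda P_-(\Phi\Psi C_\Lambda f_\Lambda)$, so that Theorem~\ref{ctheta}(1) assembles $P_+\Lambda P_-(\Phi C_\Theta f)$. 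Finally, use $P_+\Lambda P_-\Lambda^*+\Lambda P_\Psi\Lambda^*=P_\Theta=P_+\Theta P_-\Theta^*$ to collapse the two leftover terms into $-P_+\Theta P_-(\Phi C_\Lambda P_\Lambda f)$. This is exactly the paper's argument.
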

\begin{proof}
First we have
\begin{align*}
A_{\Phi}^{\Lambda}P_{\Lambda}+P_{\Lambda}(\Phi\Lambda P_{\Psi}(\Lambda^{*} I_{ K_{\Theta}}))
&=A_{\Phi}^{\Lambda}P_{\Lambda}+P_{\Lambda}(\Phi\Lambda P_{\Psi}(\Lambda^{*} I_{ K_{\Theta}}))\\
&=A_{\Phi}^{\Lambda}P_{\Lambda}+P_{\Lambda}(\Phi(P_{\Theta}-P_{\Lambda}))\\
%&=A_{\Phi}^{\Lambda}P_{\Lambda}+P_{\Lambda}\Phi P_{\Theta}-P_{\Lambda}\Phi P_{\Lambda}\\
&=A_{\Phi}^{\Theta,\Lambda}.
\end{align*}
It follows that
$$A_{\Phi}^{\Lambda}C_{\Lambda,\Psi}{\mid_{K_{\Lambda}}}=A_{\Phi}^{\Lambda}C_{\Lambda}=C_{\Lambda}A_{\Phi^{*}}^{\Lambda}=C_{\Lambda,\Psi}A_{\Phi^{*}}^{\Lambda}. $$
Now, we have
\begin{align}\label{TOH}
A_{\Phi}^{\Theta,\Lambda}C_{\Lambda,\Psi}f
&=A_{\Phi}^{\Lambda}C_{\Lambda,\Psi}f+P_{\Lambda}(\Phi\Lambda P_{\Psi}(\Lambda^{*} C_{\Lambda,\Psi}f),
\end{align}
and
\begin{align*}
P_{\Lambda}(\Phi\Lambda P_{\Psi}(\Lambda^{*} C_{\Lambda,\Psi}f)
&=P_{\Lambda}(\Phi\Lambda P_{\Psi}(\Lambda^{*} C_{\Lambda,\Psi}(f_{\Psi}+\Psi f_{\Lambda}))\\
&=P_{\Lambda}(\Phi\Lambda P_{\Psi}\Lambda^{*}( C_{\Lambda}f_{\Lambda}+\Lambda C_{\Psi}f_{\Psi}))\\
&=P_{\Lambda}(\Phi\Lambda P_{\Psi}\Lambda^{*}( \Lambda \bar z \Gamma (f_{\Lambda})+\Lambda C_{\Psi}f_{\Psi}))\\
&=P_{\Lambda}(\Phi\Lambda P_{\Psi}( \Lambda^{*}\Lambda \bar z \Gamma (f_{\Lambda})+\Lambda^{*}\Lambda C_{\Psi}f_{\Psi}))\\
&=P_{\Lambda}(\Phi\Lambda P_{\Psi}(\bar z \Gamma (f_{\Lambda})+ C_{\Psi}f_{\Psi}))\\
&=P_{\Lambda}(\Phi\Lambda P_{\Psi}(\bar z \Gamma (f_{\Lambda}))+\Phi\Lambda P_{\Psi}( C_{\Psi}f_{\Psi}))\\
&=P_{\Lambda}(\Phi\Lambda P_{\Psi}( C_{\Psi}f_{\Psi}))=P_{+}\Lambda P_{-}\Lambda^{*}(\Phi\Lambda ( C_{\Psi}f_{\Psi}))\\
&=P_{+}\Lambda P_{-}(\Lambda^{*}\Lambda\Phi( C_{\Psi}f_{\Psi}))=P_{+}\Lambda P_{-}(\Phi ( C_{\Psi}f_{\Psi})).
\end{align*}
Therefore, \eqref{TOH} becomes
\begin{equation}\label{AC}
    A_{\Phi}^{\Theta,\Lambda}C_{\Lambda,\Psi}f
=C_{\Lambda,\Psi}A_{\Phi^{*}}^{\Lambda}f_{\Lambda}+P_{+}\Lambda P_{-}(\Phi ( C_{\Psi}f_{\Psi})).
\end{equation}
Similarly, we have
$$A_{\Phi}^{\Lambda,\Theta}=A_{\Phi^{*}}^{\Lambda}+\Lambda P_{\Psi}\Lambda^{*}\Phi^{*}P_{\Lambda}.$$
Therefore,
\begin{equation}\label{CA}
C_{\Lambda,\Psi}A_{\Phi^{*}}^{\Lambda,\Theta}=C_{\Lambda,\Psi}A_{\Phi^{*}}^{\Lambda}+C_{\Lambda,\Psi}\Lambda P_{\Psi}\Lambda^{*}\Phi^{*}P_{\Lambda}.
\end{equation}
We also have
$$P_{+}\Lambda P_{-}\Lambda^{*}+\Lambda P_{\Psi}\Lambda^{*}=P_{\Theta}=P_{+}\Theta P_{-}\Theta^{*}.$$
By taking the difference of \eqref{AC} and \eqref{CA}, we have
\begin{align*}
(A_{\Phi}^{\Theta,\Lambda}C_{\Lambda,\Psi}-C_{\Lambda,\Psi}A_{\Phi^{*}}^{\Lambda,\Theta})f
&=P_{+}\Lambda P_{-}(\Phi ( C_{\Psi}f_{\Psi}))-C_{\Lambda,\Psi}\Lambda P_{\Psi}(\Lambda^{*}\Phi^{*}P_{\Lambda}f)\\
&=P_{+}\Lambda P_{-}(\Phi ( C_{\Psi}f_{\Psi}))+P_{+}\Lambda P_{-}(\Phi \Psi( C_{\Lambda}f_{\Lambda}))\\
&-P_{+}\Lambda P_{-}(\Phi \Psi( C_{\Lambda}f_{\Lambda}))-\Lambda C_{\Psi}P_{\Psi}(\Lambda^{*}\Phi^{*}f_{\Lambda})\\
&=P_{+}\Lambda P_{-}(\Phi ( C_{\Psi}f_{\Psi}+\Psi C_{\Lambda}f_{\Lambda}))\\
&-P_{+}\Lambda P_{-}(\Phi \Psi( C_{\Lambda}f_{\Lambda}))-\Lambda C_{\Psi}P_{\Psi}(\Lambda^{*}\Phi^{*}f_{\Lambda})\\
&=P_{+}\Lambda P_{-}(\Phi ( C_{\Psi}f_{\Psi}+\Psi C_{\Lambda}f_{\Lambda}))\\
&-P_{+}\Lambda P_{-}(\Psi\Phi( C_{\Lambda}f_{\Lambda}))-\Lambda C_{\Psi}A_{\Lambda^{*}\Phi^{*}}^{\Lambda,\Psi}f_{\Lambda}\\
&=P_{+}\Lambda P_{-}(\Phi C_{\Theta}f)
-P_{+}\Lambda P_{-}\Lambda^{*}(\Theta\Phi ( C_{\Lambda}f_{\Lambda}))\\
&-\Lambda C_{\Psi}A_{\Lambda^{*}\Phi^{*}}^{\Lambda,\Psi}f_{\Lambda}.
\end{align*}
By using Theorem \ref{LT}, we have $C_{\Psi}A_{\Lambda^{*}\Phi^{*}}^{\Lambda,\Psi}=A_{\Omega}^{\Lambda,\Psi}C_{\Lambda}$, where
$$\Omega=\Psi\Gamma\Lambda^{*}\Phi^{*}\Lambda\Gamma=\Psi\Gamma\Phi^{*}\Gamma=\Psi\Phi=\Lambda^{*}\Theta\Phi.$$
Hence,
\begin{align*}
(A_{\Phi}^{\Theta,\Lambda}C_{\Lambda,\Psi}-C_{\Lambda,\Psi}A_{\Phi^{*}}^{\Lambda,\Theta})f
&=P_{+}\Lambda P_{-}(\Phi C_{\Theta}f)
-P_{+}\Lambda P_{-}\Lambda^{*}(\Theta\Phi ( C_{\Lambda}f_{\Lambda}))\\
&-\Lambda C_{\Psi}A_{\Lambda^{*}\Phi^{*}}^{\Lambda,\Psi}f_{\Lambda}\\
&=P_{+}\Lambda P_{-}(\Phi C_{\Theta} f)
-P_{+}\Lambda P_{-}\Lambda^{*}(\Theta\Phi ( C_{\Lambda}f_{\Lambda}))\\
&-\Lambda A_{\Omega}^{\Lambda,\Psi}C_{\Lambda} f_{\Lambda}\\
%&=P_{+}\Lambda P_{-}(\Phi C_{\Theta} f)
%-P_{+}\Lambda P_{-}\Lambda^{*}(\Theta\Phi ( C_{\Lambda}f_{\Lambda}))\\
%&-\Lambda P_{\Psi}(\Omega C_{\Lambda} f_{\Lambda})\\
&=P_{+}\Lambda P_{-}(\Phi C_{\Theta} f)
-P_{+}\Lambda P_{-}\Lambda^{*}(\Theta\Phi ( C_{\Lambda}f_{\Lambda}))\\
&-\Lambda P_{\Psi}(\Lambda^{*}\Phi\Theta C_{\Lambda} f_{\Lambda})\\
&=P_{+}\Lambda P_{-}(\Phi C_{\Theta} f)
-(P_{+}\Lambda P_{-}\Lambda^{*}+\Lambda P_{\Psi}\Lambda^{*})(\Theta\Phi ( C_{\Lambda}P_{\Lambda}f))\\
&=P_{+}\Lambda P_{-}(\Phi C_{\Theta} f)
-P_{\Theta}(\Theta\Phi ( C_{\Lambda}P_{\Lambda}f))\\
&=P_{+}\Lambda P_{-}(\Phi C_{\Theta} f)
-P_{+}\Theta P_{-}\Theta^{*}(\Theta\Phi ( C_{\Lambda}P_{\Lambda}f))\\
&=P_{+}\Lambda P_{-}(\Phi C_{\Theta} f)
-P_{+}\Theta P_{-}(\Phi ( C_{\Lambda}P_{\Lambda}f))\\
&=(\widetilde{H}_{\Lambda}H_{\Phi}C_{\Theta}-\widetilde{H}_{\Theta}H_{\Phi}C_{\Lambda}P_{\Lambda})f.
\end{align*}
\end{proof}

\noindent $\mathbf{Acknowledgment}$

\noindent Both the authors are supported by the project TUBITAK 1001, 123F356.

\end{document}